\newtheorem{theorem}{Theorem}[section]
\newtheorem{lemma}[theorem]{Lemma}
\newtheorem{definition}[theorem]{Definition}
\newtheorem{proposition}[theorem]{Proposition}
\newtheorem{corollary}[theorem]{Corollary}
\newtheorem{remark}[theorem]{Remark}
\def\<{\langle}
\def\>{\rangle}
\def\a{\alpha}
\def\b{\beta}
\def\D{\Delta}
\def\di{\diamond}
\def\o{\otimes}
\def\r{\rho}
\date{}
\begin{document}
\renewcommand{\baselinestretch}{1.2}
\renewcommand{\arraystretch}{1.0}
\title{\bf Fundamental theorem of Poisson Hopf module for weak Hopf algebras}
 \date{}
\author {{\bf Daowei Lu$^1$\footnote {Corresponding author:  ludaowei620@126.com}, Dingguo Wang$^2$}\\
{\small $^1$School of Mathematics and Big data, Jining University}\\
{\small Qufu, Shandong 273155, P. R. China}\\
{\small $^2$School of Mathematical Sciences, Qufu Normal University}\\
{\small Qufu, Shandong 273165, P. R. China}
}
 \maketitle
\begin{center}
\begin{minipage}{12.cm}

\noindent{\bf Abstract.} Let $H$ be a weak Hopf algebra with a bijective antipode and $A$ an $H$-comodule Poisson algebra. In this paper, we mainly generalize the fundamental theorem of Poisson Hopf modules to the case of weak Hopf algebras. Besides we will deduce the relative projectivity in the category of Poisson Hopf module.
\\

\noindent{\bf Keywords:} Weak Hopf algebra; Poisson algebra; Poisson module; Poisson Hopf module.
\\

 \noindent{\bf  Mathematics Subject Classification:} 17B63, 16T05.
 \end{minipage}
 \end{center}
 \normalsize\vskip1cm

\section*{Introduction}

Poisson algebras originated from the research of Poisson geometry in the 1970s~\cite{VK,Li} and has
been widely applied in mathematics and physics, such as Poisson
manifolds~\cite{BV}, algebraic geometry~\cite{BLLM}, operads~\cite{GR}, quantization
theory~\cite{Kon}, quantum groups~\cite{CP}, and classical and quantum mechanics~\cite{Od}.

Let $H$ be a Hopf algebra, and $M$ a right $H$-Hopf module. Then there exists an
isomorphism of Hopf modules:
$$M^{coH}\o H\rightarrow M,\ m\o h\mapsto m\cdot h,$$
where $M^{coH}$ denotes the space of coinvariant of $M$. This is the so-called fundamental isomorphism of Hopf modules, which plays an essential role in the theory of Hopf algebras, such as the integral theory, Nichols-Zoller theorem, Galois theory and so on~\cite{Mon}.

Doi~\cite{D} introduced the concept of relative Hopf modules, which is a generalization
of Hopf modules. Moreover, he gave the fundamental theorem of relative Hopf modules, which is a weaker version of the above isomorphism. Explicitly let $H$ be a Hopf algebra, and $A$ a right $H$-comodule algebra. If there exists a right $H$-comodule map $\phi:H\rightarrow A$ which is an algebra map, then for any relative right $(A,H)$-Hopf module $M$, the following isomorphism of relative right $(A,H)$-Hopf modules is obtained
$$M\o_{A^{coH}}A\rightarrow M,\ m\o a\mapsto m\cdot a.$$

In~\cite{Zhang04} Zhang and Zhu generalized this fundamental isomorphism of relative Hopf modules to weak Hopf algebra, and gave the fundamental isomorphism of weak Doi-Hopf modules.
Let $H$ be a Hopf algebra and $A$ a Poisson algebra such that $A$ is a $H$-comodule Poisson algebra. In~\cite{Gu} Gu$\acute{e}$d$\acute{e}$non established the fundamental theorem of Poisson $(A,H)$-Hopf modules, which generalized the fundamental isomorphism to the case of Poisson algebra. Motivated by these results, in this paper, we mainly generalize the fundamental theorem of Poisson Hopf modules to weak Hopf algebras. The paper is organized as follows. In section 1, we will recall basic results on weak Hopf algebras and Poisson algebras. Let $H$ be a weak Hopf algebra and $A$ a Poisson algebra. In section 2, we will introduce the notion of Poisson $(A,H)$-Hopf modules, and prove that for any Poisson $(A,H)$-Hopf modules $M$, $M^{AcoH}$ is a Poisson $A^{AcoH}$-submodule of $M$. In section 3, we will give the first main result, see Theorem 3.4. In section 4, we will establish the fundamental isomorphism of Poisson $(A,H)$-Hopf modules, see Theorem 4.6, and show that $(-\o_BA, (-)^{AcoH})$ is a pair of adjoint functors between the categories $\mathcal{M}_{B}$ and  $\mathcal{M}^H_{\mathcal{P}A}$, where $B=A^{AcoH}$.

\section{Preliminaries}
\def\theequation{1.\arabic{equation}}
\setcounter{equation} {0}

Throughout this paper, let $k$ be a fixed field, and all vector spaces and tensor product are over $k$. For a coalgebra $C$, we will use the Heyneman-Sweedler's notation $\Delta(c)=  c_{1}\otimes c_{2},$
for any $c\in C$ (summation omitted).

Let $H$ be an algebra and a coalgebra. Recall from \cite{Bohm99}, $H$ is a weak bialgebra if it satisfies the following conditions
\begin{align}
&\Delta(xy)=\Delta(x)\Delta(y),\\
&(\Delta\o id)\Delta(1)=(\Delta(1)\o1)(1\o\Delta(1)),\\
&(\Delta\o id)\Delta(1)=(1\o\Delta(1))(\Delta(1)\o1),\\
&\varepsilon(xyz)=\varepsilon(xy_1)\varepsilon(y_2z),\ \varepsilon(xyz)=\varepsilon(xy_2)\varepsilon(y_1z),
\end{align}
for all $x,y,z\in H$. Moreover a weak bialgebra $H$ is called a weak Hopf algebra if there exists a linear map $S:H\rightarrow H$ satisfying
\begin{align}
& x_1S(x_2)=\varepsilon(1_1x)1_2,\\
&S(x_1)x_2=1_1\varepsilon(x 1_2),\\
& S(x_1)x_2S(x_3)=S(x).
\end{align}
The antipode $S$ satisfies the following conditions
\begin{align}
& S(xy)=S(y)S(x),\\
& S(x)_1\o S(x)_2=S(x_2)\o S(x_1),\\
&S(1)=1,\ \varepsilon\circ S=\varepsilon.
\end{align}

Denote $\varepsilon_t(x)=\varepsilon(1_1x)1_2, \varepsilon_s(x)=1_1\varepsilon(x 1_2),$ and the images of $\varepsilon_t$ and $\varepsilon_s$ by $H_t$ and $H_s$ respectively. Now we will list some identities used in this paper as follows:
\begin{align}
&\Delta(z)=1_1x\o1_2,\ \Delta(y)=1_1\o y1_2,\ \hbox{for all\ }y,z\in H_t,\label{1a}\\
&\varepsilon_t(x)=x_1S(x_2),\ \varepsilon_s(x)=S(x_1)x_2,\\
&x_1\o\varepsilon_t(x_2)=1_1 x\o 1_2,\ \varepsilon_t(x_1)\o x_2=S(1_1)\o 1_2 x,\label{1b}\\
&\varepsilon_t(\varepsilon_t(x)y)=\varepsilon_t(xy).\label{1d}
\end{align}

Let $H$ be a weak Hopf algebra. The algebra $A$ is called a right $H$-comodule algebra if $A$ is a right $H$-comodule via $\r:A\rightarrow A\o H,\ a\mapsto a_{(0)}\o a_{(1)}$, and for all $a,b\in A$,
\begin{align}
&(\r\o id)\circ\r=(id\o\Delta)\circ\r,\label{1.2a}\\
&1_{(0)}a \o 1_{(1)}=a_{(0)}\o\varepsilon_t(a_{(1)}),\label{1.2b}\\
&\r(ab)=\r(a)\r(b).\label{1.2c}
\end{align}
It is noticed that (\ref{1.2b}) has an equivalent form
\begin{equation}
1_{(0)}\o1_{(1)1}\o1_{(1)2}=1_{(0)}\o1_11_{(1)}\o1_{2}.\label{1.2b'}
\end{equation}

If $A$ is a right $H$-comodule  algebra, the vector subspace
$$A^{coH}=\{a\in A|a_{(0)}\o a_{(1)}=a_{(0)}\o\varepsilon_t(a_{(1)})\}$$
of $A$ is called the $H$-coinvariants of $A$.

Let $A$ be an $H$-comodule algebra. A vector space $M$ is an $(A, H)$-Hopf module if $M$ is a left $A$ -module and a right $H$-comodule such that
$$(am)_{(0)}\o (am)_{(1)}=a_{(0)}m_{(0)}\o a_{(1)}m_{(1)},$$
for all $a\in A,m\in M$.

\begin{lemma}\cite{Zhang04}
Let $A$ be a right $H$-comodule algebra, and $M$ an $(A,H)$-Hopf module. Then we have
\begin{itemize}
  \item [(1)] for all $m\in M,x\in H$,
  \begin{equation}
  m_{(0)}\varepsilon(m_{(1)}x)=m 1_{(0)}\varepsilon(1_{(1)}x).\label{1.2e}
  \end{equation}
  \item [(2)] $M^{coH}=\{m\in M|\rho(m)=m_{(0)}\o \varepsilon_t(m_{(1)})\}=\{m\in M|\rho(m)=m\cdot1_{(0)}\o 1_{(1)}\}$.
\end{itemize}
\end{lemma}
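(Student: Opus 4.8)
The two structural facts that drive the whole lemma are short consequences of the axioms, so I would isolate them first. Since $1_A$ acts as the identity on $M$, applying the compatibility $(am)_{(0)}\o(am)_{(1)}=a_{(0)}m_{(0)}\o a_{(1)}m_{(1)}$ with $a=1_A$ gives
\[
\rho(m)=1_{(0)}m_{(0)}\o 1_{(1)}m_{(1)}\qquad(\star)
\]
for every $m\in M$; this lets me peel the unit-coaction $1_{(0)}\o 1_{(1)}$ off of $\rho(m)$. Next, taking $a=1_A$ in the comodule-algebra axiom (\ref{1.2b}) shows that $1_A$ is itself coinvariant,
\[
1_{(0)}\o 1_{(1)}=1_{(0)}\o \varepsilon_t(1_{(1)}),
\]
so the $H$-leg of $\rho(1_A)$ already lies in $H_t$. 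Both identities follow in one line from the definitions.

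For part (1) I would prove the pairing identity by starting from the coaction side, substituting $(\star)$ to rewrite $m_{(0)}\o m_{(1)}$ as $1_{(0)}m_{(0)}\o 1_{(1)}m_{(1)}$, and then eliminating the $m$-coaction against $x$ by means of the weak counit axiom (1.4). Concretely, (1.4) splits the triple product inside $\varepsilon$ into a factor pairing $1_{(1)}$ with one Sweedler leg of $m_{(1)}$ and a factor pairing the other leg with $x$; rewriting $\Delta$ on the unit-coaction through (\ref{1.2b'}) and then contracting the remaining $m$-coaction by the comodule counit collapses everything onto the asserted right-hand side. The step I expect to be the main obstacle is exactly this manipulation: because $\varepsilon$ is \emph{not} multiplicative in a weak bialgebra, the order of the splittings matters, and a naïve split followed by reassociation via coassociativity merely undoes itself. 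One has to feed in the coinvariance of $1_A$ and the $H_t$-coproduct identities (\ref{1a}) at the right moment to break that symmetry and recover the correct side.

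For part (2) I would first upgrade the pairing identity of part (1) to the tensor identity
\[
1_{(0)}m\o 1_{(1)}=m_{(0)}\o \varepsilon_t(m_{(1)})\qquad(\dagger)
\]
valid for all $m\in M$ (the module analogue of (\ref{1.2b})). This comes straight out of part (1): writing $\varepsilon_t(m_{(1)})=\varepsilon(1_1m_{(1)})1_2$, applying the pairing identity with $x$ specialised to the first Sweedler leg of $\Delta(1_H)$ while the second leg is carried along in the $H$-slot, reassembling $\varepsilon(1_11_{(1)})1_2=\varepsilon_t(1_{(1)})$, and finally invoking the coinvariance of $1_A$ to replace $\varepsilon_t(1_{(1)})$ by $1_{(1)}$.

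Once $(\dagger)$ is in hand, part (2) is immediate and purely formal: the right-hand sides of the two displayed descriptions of $M^{coH}$ are equal for \emph{every} $m$ by $(\dagger)$, so a given $m$ satisfies $\rho(m)=m_{(0)}\o\varepsilon_t(m_{(1)})$ precisely when it satisfies $\rho(m)=1_{(0)}m\o 1_{(1)}$, and the two sets coincide. Thus the entire lemma reduces to the single weak-counit computation behind part (1), with the passage (1)$\,\Rightarrow(\dagger)\Rightarrow\,$(2) being bookkeeping; I therefore expect the only genuine difficulty to be the careful application of (1.4) together with (\ref{1.2b'}) and the coinvariance of $1_A$, keeping precise track of which tensor leg is paired with $x$.
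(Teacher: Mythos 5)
The paper itself offers no proof of this lemma --- it is quoted verbatim from Zhang--Zhu \cite{Zhang04} --- so your attempt has to stand on its own. Your skeleton is right in one important respect: the whole lemma hinges on the module analogue of (\ref{1.2b}), namely
\[
1_{(0)}\cdot m\o 1_{(1)}=m_{(0)}\o\varepsilon_t(m_{(1)})\qquad(\dagger)
\]
for \emph{all} $m\in M$, and part (2) is indeed purely formal once $(\dagger)$ is available, since both defining conditions then read $\rho(m)=$ the same element. But there are two genuine gaps in how you propose to get there.

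First, you never actually carry out the computation for part (1): you describe the intended splitting via the weak counit axiom, correctly observe that the naive split-and-reassociate via coassociativity is circular, and then assert that one must ``feed in the coinvariance of $1_A$ at the right moment'' without exhibiting that moment. That manipulation \emph{is} the content of the proof, not bookkeeping. Second, your passage from part (1) to $(\dagger)$ fails: part (1) only controls expressions $m_{(0)}\varepsilon(m_{(1)}x)$ with $x$ to the \emph{right} of $m_{(1)}$, so specialising $x=1_1$ yields $m_{(0)}\varepsilon(m_{(1)}1_1)\o 1_2$, whereas $\varepsilon_t(m_{(1)})=\varepsilon(1_1m_{(1)})1_2$ has $1_1$ on the \emph{left}; in a weak bialgebra $\varepsilon(h1_1)1_2$ and $\varepsilon(1_1h)1_2$ are in general \emph{different} idempotent projections, so the substitution does not produce $\varepsilon_t$. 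The repair is to reverse your logical order and prove $(\dagger)$ directly, which is short: write $\varepsilon_t(m_{(1)})=\varepsilon(1_1m_{(1)})1_2$, replace $\rho(m)$ by $\rho(1_A\cdot m)=1_{(0)}m_{(0)}\o1_{(1)}m_{(1)}$, use (\ref{1.2b'}) to convert $1_11_{(1)}\o1_2$ into $1_{(1)1}\o1_{(1)2}$, then apply coassociativity of $\rho_A$ and the Hopf-module compatibility to recognise $(1_{(0)}m)_{(0)}\varepsilon((1_{(0)}m)_{(1)})\o1_{(1)}=1_{(0)}m\o1_{(1)}$. With $(\dagger)$ in hand, part (1) becomes the one-line computation $1_{(0)}m\,\varepsilon(1_{(1)}x)=m_{(0)}\varepsilon(\varepsilon_t(m_{(1)})x)=m_{(0)}\varepsilon(m_{(1)}x)$, the last equality following from (\ref{1d}) together with $\varepsilon\circ\varepsilon_t=\varepsilon$, and part (2) follows exactly as you describe.
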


A Poisson algebra is a commutative associative unitary $k$-algebra $A$ endowed with a bilinear map $\{\cdot,\cdot\}:A\times A\rightarrow A$, called Poisson bracket, providing $A$ with a Lie algebra structure and satisfying the relation
\begin{equation}
\{a,a'a''\}=a'\{a,a''\}+\{a,a'\}a'',\label{1.2d}
\end{equation}
for all $a,a',a''\in A$. A Poisson subalgebra of $A$ is a subalgebra of $A$ which is also a Lie subalgebra of $A$. The Poisson center $A^A$ of $A$ is the Lie-A-invariant elements of $A$, namely
$$A^A=\{a\in A|\{a,A\}=0\}.$$
Obviously $A^A$ is a Poisson subalgebra of $A$.

Let $A$ and $B$ be two Poisson algebras. A homomorphism $f:A\rightarrow B$ of Poisson algebras is a homomorphism of algebras which preserves the brackets, that is,
$$\{f(a),f(a')\}=f(\{a,a'\})$$
for all $a,a'\in A$.

Let $A$ be a Poisson algebra. A vector space $M$ is called a Lie $A$-module with the action
$(m,a)\mapsto m\di a$ if
\begin{equation}
m\di\{a,a'\}=(m\di a)\di a'-(m\di a')\di a,
\end{equation}
for all $a,a'\in A,m\in M$.

A vector space $M$ is a Poisson $A$-module if $M$ is an $A$-module: $(m,a)\mapsto ma$, and a Lie $A$-module: $(m,a)\mapsto m\di a$ satisfying the following compatibility conditions
\begin{align}
&(ma)\di a'=(m\di a')a+m\{a,a'\},\\
&m\di (aa')=(m\di a)a'+(m\di a')a,
\end{align}
for all $a,a'\in A,m\in M$.

We have $1_A\di m=0$ for all $m\in M$. The Poisson algebra $A$ itself is a Poisson $A$-module with $a\di a'=\{a,a'\}$.

Given two Poisson $A$-modules $M$ and $N$, a homomorphism $f:M\rightarrow N$ of Poisson $A$-modules is an $A$-linear map which is also a Lie $A$-linear map from $M$ to $N$. We denote $\mathcal{M}_{\mathcal{P}A}$ the category of Poisson $A$-modules with the morphisms being $A$-linear and Lie $A$-linear. Besides the set of morphisms in $\mathcal{M}_{\mathcal{P}A}$ is denoted by Hom$_{\mathcal{P}A}(M,N)$.

A Poisson submodule of a Poisson $A$-module $M$ is an $A$-submodule of $M$ which is also a Lie $A$-submodule of $M$.

If $M$ is a Poisson $A$-module, the Lie $A$-invariant elements of $M$ is
$$M^A=\{m\in M|m\di a=0, \forall a\in A\},$$
which is a vector subspace of $M$.

\section{The category $\mathcal{M}^H_{\mathcal{P}A}$}
\def\theequation{2.\arabic{equation}}
\setcounter{equation} {0}

%\begin{definition}
%Let $H$ be a weak Hopf algebra with a bijective antipode. The algebra $A$ is called a right $H$-comodule algebra if $A$ is a right $H$-comodule via $\r:A\rightarrow A\o H,\ a\mapsto a_{(0)}\o a_{(1)}$, and for all $a,b\in A$,
%\begin{align}
%&(\r\o id)\circ\r=(id\o\Delta)\circ\r,\label{1.2a}\\
%&a 1_{(0)}\o S(1_{(1)})=a_{(0)}\o\varepsilon_s(a_{(1)}),\label{1.2b}\\
%&\r(ab)=\r(a)\r(b).\label{1.2c}
%\end{align}
%\end{definition}
%
%\begin{remark}
%(1) Note that our definition is different from \cite[Definition 2.1]{Bohm00}, which is defined over weak bialgebras. Moreover, let $a=1$ in (\ref{1.2b}), we have
%\begin{align*}
%&1_{(0)}\o S(1_{(1)})_1\o S(1_{(1)})_2\\
%&=1_{(0)}\o\varepsilon_s(1_{(1)})_1\o\varepsilon_s(1_{(1)})_2\\
%&=1_{(0)}\o1_1\o\varepsilon_s(1_{(1)})1_2\\
%&=1_{(0)}\o1_1\o S(1_{(1)})1_2.
%\end{align*}
%Hence
%$$1_{(0)}\o S(1_{(1)2})\o S(1_{(1)1})=1_{(0)}\o1_1\o S(1_{(1)})1_2.$$
%Thus we have
%$$1_{(0)}\o1_{(1)1}\o1_{(1)2}=1_{(0)}\o 1_11_{(1)}\o1_2.$$
%Therefore $A$ is also a right $H$-comodule algebra in the sense of \cite{Bohm00}.
%
%(2) By (\ref{1c}), $H$ itself is a right $H$-comodule algebra via comultiplication.
%
%(3) When $H$ is an ordinary Hopf algebra, the condition (\ref{1.2b}) becomes trivial. That is, Definition 2.1 could recover the comodule algebra over bialgebras.
%\end{remark}

\begin{definition}
Let $H$ be a weak Hopf algebra and $A$ a Poisson algebra. We call $A$ a right $H$-comodule Poisson algebra if $A$ is a right $H$-comodule algebra and satisfies the following relation
\begin{equation}
\{a,a'\}_{(0)}\o\{a,a'\}_{(1)}=\{a_{(0)},a'_{(0)}\}\o a_{(1)}a'_{(1)},\label{2a}
\end{equation}
for all $a,a'\in A$.
\end{definition}

If $A$ is a right $H$-comodule Poisson algebra, set
$$A^{AcoH}=\{a\in A|a\in A^A\ \hbox{and}\ a\in A^{coH}\}.$$

\begin{lemma}
Let $A$ be a right $H$-comodule Poisson algebra. Then $A^{coH}$ is an $H$-subcomodule Poisson algebra of $A$.
\end{lemma}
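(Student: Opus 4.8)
The plan is to check two properties: first that $A^{coH}$ is a Poisson subalgebra of $A$ (that is, it contains $1_A$ and is closed under both the product and the bracket), and second that it is an $H$-subcomodule, i.e. $\r(A^{coH})\subseteq A^{coH}\o H$. Once these hold, the comodule algebra axioms and the comodule Poisson algebra axiom (\ref{2a}) are inherited from $A$ verbatim, so $A^{coH}$ automatically becomes an $H$-comodule Poisson algebra and the inclusion is a morphism. Throughout I will use both descriptions of the coinvariants, namely $a\in A^{coH}$ iff $a_{(0)}\o a_{(1)}=a_{(0)}\o\varepsilon_t(a_{(1)})$ and (by Lemma 1.1(2) applied to $A$ regarded as an $(A,H)$-Hopf module over itself) iff $\r(a)=a1_{(0)}\o1_{(1)}$, together with the standard facts that $H_t$ is a subalgebra of $H$ and that $\varepsilon_t$ is idempotent with image $H_t$, so that $\varepsilon_t$ restricts to the identity on $H_t$. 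Note also that $1_A\in A^{coH}$, which is exactly (\ref{1.2b}) read with $a=1_A$.

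For the subalgebra part, let $a,b\in A^{coH}$. Since $\r$ is an algebra map, $\r(ab)=a_{(0)}b_{(0)}\o a_{(1)}b_{(1)}$; using coinvariance to replace $a_{(1)}$ by $\varepsilon_t(a_{(1)})$ and $b_{(1)}$ by $\varepsilon_t(b_{(1)})$, the second leg becomes $\varepsilon_t(a_{(1)})\varepsilon_t(b_{(1)})$, a product of two elements of $H_t$ and hence itself in $H_t$. Because $\varepsilon_t$ fixes $H_t$ pointwise, applying $\varepsilon_t$ to this leg changes nothing, which is precisely the coinvariance condition for $ab$; thus $ab\in A^{coH}$. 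For the bracket I run the same argument starting from (\ref{2a}): $\{a,b\}_{(0)}\o\{a,b\}_{(1)}=\{a_{(0)},b_{(0)}\}\o a_{(1)}b_{(1)}$, and the identical substitution together with $\varepsilon_t|_{H_t}=\mathrm{id}$ shows the right-hand leg is unaffected by $\varepsilon_t$, giving $\{a,b\}\in A^{coH}$. With $1_A\in A^{coH}$ this makes $A^{coH}$ a Poisson subalgebra of $A$.

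For the subcomodule part, fix $a\in A^{coH}$ and apply coassociativity (\ref{1.2a}) to $\r(a)=a_{(0)}\o\varepsilon_t(a_{(1)})$. Since $\varepsilon_t(a_{(1)})\in H_t$, formula (\ref{1a}) gives $\Delta(\varepsilon_t(a_{(1)}))=1_1\varepsilon_t(a_{(1)})\o1_2$, so coassociativity yields $\r(a_{(0)})\o a_{(1)}=a_{(0)}\o 1_1\varepsilon_t(a_{(1)})\o1_2$. To conclude that every first leg $a_{(0)}$ is again coinvariant I must check that applying $\varepsilon_t$ to the middle tensor factor leaves this equality unchanged, and a short computation shows the whole claim collapses to the single weak-Hopf identity $\varepsilon_t(1_1z)\o1_2=1_1z\o1_2$ for all $z\in H_t$. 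This identity is the main obstacle, being the only place where the weak (as opposed to ordinary) structure genuinely intervenes: I expect to establish it by writing $\varepsilon_t(1_1z)=1_{11}\,z\,S(1_{12})$ (using the relation $\varepsilon_t(x)=x_1S(x_2)$ and collapsing $z_1S(z_2)=\varepsilon_t(z)=z$), and then simplifying $1_{11}\o1_{12}\o1_2$ by means of the weak-bialgebra axioms for $\Delta(1)$ and the antipode relations; a check in the groupoid-algebra case already confirms the identity, which makes me confident the manipulation closes. Once it is available, $\r(a_{(0)})$ satisfies the coinvariance condition, so $\r(A^{coH})\subseteq A^{coH}\o H$, and combined with the previous paragraph $A^{coH}$ is an $H$-subcomodule Poisson algebra of $A$.
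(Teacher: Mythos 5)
The first half of your argument --- that $A^{coH}$ contains $1_A$ and is closed under the product and the bracket --- is correct, and for the bracket it is exactly the paper's own computation: substitute $a_{(1)}\mapsto\varepsilon_t(a_{(1)})$ using coinvariance, observe that the resulting second leg lies in the subalgebra $H_t$, and use that $\varepsilon_t$ restricts to the identity on $H_t$. (The paper outsources the multiplicative closure to \cite{Zhang04} and only writes out the bracket.)

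The subcomodule half does not close, and the obstacle you flagged is fatal rather than technical. Your reduction of $\r(A^{coH})\subseteq A^{coH}\o H$ to the identity $\varepsilon_t(1_1z)\o 1_2=1_1z\o 1_2$ for $z\in H_t$ is fine, but that identity is false in general; checking it on a groupoid algebra is misleading because $kG$ is cocommutative. Take $H=(kG)^{*}$ for $G$ the groupoid with two objects $e_1,e_2$ and a single isomorphism $g\colon e_1\to e_2$; with the dual basis $\{p_x\}$ one has $\Delta(1)=\sum_{s(h)=t(k)}p_h\o p_k$ and $\varepsilon_t(p_h)=[h\in G_0]\sum_{t(k)=h}p_k$, hence $\varepsilon_t(1_1)\o 1_2=\sum_{e\in G_0}\varepsilon_t(p_e)\o\varepsilon_t(p_e)$, which does not contain the term $p_g\o p_{e_1}$ occurring in $\Delta(1)$. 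So your identity fails already at $z=1$. Moreover, the same example shows that no argument can succeed: taking $A=H=(kG)^{*}$ with coaction $\Delta$ and the zero bracket, $A$ is an $H$-comodule Poisson algebra, $A^{coH}=H_t$, and $\r(1)=\Delta(1)\notin H_t\o H$, so the literal inclusion $\r(A^{coH})\subseteq A^{coH}\o H$ is simply not available in the weak setting. The paper never proves this inclusion either --- it cites \cite{Zhang04}, and what is actually needed and used later is only that $A^{coH}$ is a Poisson subalgebra, which is the part you did prove. You should drop the subcomodule paragraph, or replace it by an explicit statement of whatever weaker ``subcomodule'' property is intended.
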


\begin{proof}
First of all , by \cite{Zhang04}, $A^{coH}$ is an $H$-subcomodule algebra of $A$. For all $a,a'\in A^{coH}$,
\begin{align*}
\{a,a'\}_{(0)}\o\varepsilon_t(\{a,a'\}_{(1)})&=\{a_{(0)},a'_{(0)}\}\o\varepsilon_t(a_{(1)}a'_{(1)})\\
&=\{a_{(0)},a'_{(0)}\}\o\varepsilon_t(\varepsilon_t(a_{(1)})a'_{(1)})\\
&\stackrel{(\ref{1d})}=\{a_{(0)},a'_{(0)}\}\o\varepsilon_t(a_{(1)})\varepsilon_t(a'_{(1)})\\
&=\{a_{(0)},a'_{(0)}\}\o a_{(1)}a'_{(1)}\\
&=\{a,a'\}_{(0)}\o \{a,a'\}_{(1)}.
\end{align*}
Hence $\{a,a'\}\in A^{coH}$. That is, $A^{coH}$ is a Lie subalgebra of $A$.
\end{proof}

\begin{definition}
Let $A$ be a right $H$-comodule Poisson algebra. A vector space $M$ is called a Poisson $(A,H)$-Hopf module if $M$ is a Poisson $A$-module and an $(A,H)$-Hopf module such that for all $a\in A,m\in M$,
\begin{equation}
(m\di a)_{(0)}\o (m\di a)_{(1)}=m_{(0)}\di a_{(0)}\o m_{(1)}a_{(1)}.\label{2b}
\end{equation}
\end{definition}

\begin{remark}
The right $H$-comodule Poisson algebra $A$ itself is a Poisson $(A,H)$-Hopf module.
\end{remark}

A Poisson $(A, H)$-Hopf module homomorphism between two Poisson $(A, H)$-Hopf modules is an $A$-linear map, a Lie $A$-linear map and an $H$-colinear map. We denote by $\mathcal{M}^H_{\mathcal{P}A}$ the category of Poisson $(A, H)$-Hopf modules with Poisson $(A, H)$-Hopf module homomorphisms. For $M,N\in\mathcal{M}^H_{\mathcal{P}A}$, we denote by Hom$^H_{\mathcal{P}A}(M,N)$ the vector space of Poisson $(A, H)$-Hopf module homomorphisms from $M$ to $N$.

For any object $M$ in $\mathcal{M}^H_{\mathcal{P}A}$, set
$$M^{AcoH}=\{m\in M|m_{(0)}\o m_{(1)}=m_{(0)}\o \varepsilon_t(m_{(1)}), m\di a=0\}.$$

\begin{lemma}\label{lem1}
Let $H$ be a weak Hopf algebra, $A$ a right $H$-comodule Poisson algebra, and $M$ a Poisson $(A,H)$-Hopf module. Then
\begin{itemize}
  \item [(i)] $M^A$ is an $H$-subcomodule of $M$;
  \item [(ii)] $A^A$ is an $H$-subcomodule Poisson algebra of $A$, and $A^{AcoH}$ is a Poisson subalgebra of $A$.
  \item [(iii)] $M^{AcoH}$ is a Poisson $A^{AcoH}$-submodule of $M$.
\end{itemize}
\end{lemma}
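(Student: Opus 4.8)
My plan is to treat (i) as the heart of the matter and to derive (ii) and (iii) from it together with the facts already recorded in Section~1 and Lemma~2.2.

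For (i) I would first reduce the assertion to a single identity. Since $k$ is a field, tensoring with $H$ commutes with intersections of kernels, so
\[
M^A\o H=\Big(\bigcap_{a\in A}\ker(-\di a)\Big)\o H=\bigcap_{a\in A}\ker\big((-\di a)\o id\big);
\]
hence $\rho(M^A)\subseteq M^A\o H$ is equivalent to
\begin{equation*}
(m_{(0)}\di a)\o m_{(1)}=0,\qquad m\in M^A,\ a\in A.\tag{$\dagger$}
\end{equation*}
To prove $(\dagger)$ the idea is to read (\ref{2b}) as the statement that $\rho$ is a morphism of Lie $A$-modules, where $M\o H$ carries the twisted action $(n\o h)\di a:=(n\di a_{(0)})\o h a_{(1)}$ (that this is a Lie action is where (\ref{2a}) enters). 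Then for $m\in M^A$ one gets, from $m\di a=0$,
\begin{equation*}
(m_{(0)}\di a_{(0)})\o m_{(1)}a_{(1)}=\rho(m\di a)=0,\qquad a\in A.\tag{$*$}
\end{equation*}

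The remaining task is to pass from $(*)$ to $(\dagger)$, i.e.\ to peel the factor $a_{(1)}$ off the $H$-slot while restoring $a$ in the Lie-slot. For this I would apply $(*)$ to the element $a_{(0)}$ and tensor with $a_{(1)}$; coassociativity of $\rho_A$ gives
\[
(m_{(0)}\di a_{(0)})\o m_{(1)}a_{(1)}\o a_{(2)}=0,
\]
and contracting the last two legs through $h\o h'\mapsto hS(h')$, using $x_1S(x_2)=\varepsilon_t(x)$, yields $(m_{(0)}\di a_{(0)})\o m_{(1)}\varepsilon_t(a_{(1)})=0$. Now I would invoke the comodule-algebra identity (\ref{1.2b}) in the form $a_{(0)}\o\varepsilon_t(a_{(1)})=1_{(0)}a\o1_{(1)}$, turning this into $(m_{(0)}\di(1_{(0)}a))\o m_{(1)}1_{(1)}=0$, and expand $m_{(0)}\di(1_{(0)}a)=(m_{(0)}\di 1_{(0)})a+(m_{(0)}\di a)1_{(0)}$ by the Leibniz rule. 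The first summand drops out, since evaluating $(*)$ at $a=1_A$ gives $(m_{(0)}\di 1_{(0)})\o m_{(1)}1_{(1)}=0$ and right multiplication by $a$ in the $M$-slot is linear; one is thus left with $(m_{(0)}\di a)1_{(0)}\o m_{(1)}1_{(1)}=0$. The main obstacle is precisely this final step: because $H$ is only weak, $\Delta(1)\ne1\o1$, and the idempotent factors $1_{(0)}\o1_{(1)}=\rho(1_A)$ do not simply cancel. Eliminating them to reach $(\dagger)$ is the one genuinely delicate computation, and it is here that the weak structure is used in full: the comodule relations (\ref{1.2b}) and (\ref{1.2b'}), the module identity (\ref{1.2e}), the $(A,H)$-Hopf compatibility at $a=1_A$ (which gives $1_{(0)}m_{(0)}\o1_{(1)}m_{(1)}=m_{(0)}\o m_{(1)}$), and the bijectivity of $S$, which is what makes the separation of the two coactions reversible.

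Granting (i), part (ii) follows quickly. The comodule Poisson algebra $A$ is itself a Poisson $(A,H)$-Hopf module, so (i) with $M=A$ shows that $A^A=M^A$ is an $H$-subcomodule of $A$; since the Poisson centre is closed under the product and contains $1_A$ (by $\{ab,c\}=a\{b,c\}+\{a,c\}b$), it is a Poisson subalgebra, hence an $H$-subcomodule Poisson algebra. As $A^{AcoH}=A^A\cap A^{coH}$ is the intersection of two Poisson subalgebras of $A$ ($A^{coH}$ being one by Lemma~2.2), it is again a Poisson subalgebra.

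Finally, for (iii) write $M^{AcoH}=M^A\cap M^{coH}$ and fix $m\in M^{AcoH}$, $b\in A^{AcoH}$. Closure under the module action is checked on the two defining conditions separately: $bm\in M^A$ because, by the compatibility $(mb)\di a=(m\di a)b+m\{b,a\}$, both terms vanish ($m\in M^A$ and $b\in A^A$); and $bm\in M^{coH}$ by the (weak) fact, already used in Lemma~2.2, that $A^{coH}$ preserves $M^{coH}$, verified from $\rho(bm)=b_{(0)}m_{(0)}\o b_{(1)}m_{(1)}$ together with the coinvariance of $b$ and $m$ and Lemma~1.1. Closure under the Lie action is immediate: for $m\in M^A$ one has $m\di b=0$, so the Lie action of $A^{AcoH}$ on $M^{AcoH}$ is identically zero. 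As the remaining Poisson-module axioms are inherited from $M$, this shows $M^{AcoH}$ is a Poisson $A^{AcoH}$-submodule of $M$, completing the proof.
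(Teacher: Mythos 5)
Your reductions and the first two-thirds of your argument for (i) track the paper's proof exactly: the paper likewise starts from $\rho(m\di a_{(0)})=0$, multiplies by $1\o S(a_{(1)})$ to produce $m_{(0)}\di a_{(0)}\o m_{(1)}\varepsilon_t(a_{(1)})=0$, converts this via (\ref{1.2b}) to $m_{(0)}\di(1_{(0)}a)\o m_{(1)}1_{(1)}=0$, and kills the term $(m_{(0)}\di 1_{(0)})\cdot a\o m_{(1)}1_{(1)}$ exactly as you do. The problem is that you stop at $(m_{(0)}\di a)\cdot 1_{(0)}\o m_{(1)}1_{(1)}=0$ and declare the passage to $(\dagger)$ to be ``the one genuinely delicate computation'' without performing it. That is precisely the crux of the lemma, so as written the proof of (i) has a genuine gap. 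Moreover, the toolkit you gesture at (the identities (\ref{1.2b'}) and (\ref{1.2e}), the Hopf compatibility at $a=1_A$, bijectivity of $S$) is not what closes the argument: the paper finishes by a choice-of-representatives/linear-independence argument --- writing the tensor with linearly independent second legs $m_{(1)}1_{(1)}$ to conclude $(m_{(0)}\di a)\cdot 1_{(0)}=0$ componentwise, and then applying $id\o\varepsilon$ together with $(id\o\varepsilon)\circ\rho_A=id_A$ (i.e.\ $1_{(0)}\varepsilon(1_{(1)})=1$) to strip off the remaining idempotent and obtain $m_{(0)}\di a=0$ for each summand. The antipode's bijectivity plays no role anywhere in this lemma. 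You should either carry out such an extraction explicitly or find a substitute; merely naming candidate identities does not establish $(\dagger)$.

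Parts (ii) and (iii) of your proposal are correct and essentially coincide with the paper's treatment: (ii) is exactly the paper's ``straightforward by (i)'' plus the observation that the Poisson centre and $A^{coH}$ are Poisson subalgebras, and your verification in (iii) --- $mb\in M^A$ from $(mb)\di a=(m\di a)b+m\{b,a\}$ and $mb\in M^{coH}$ from $\rho(mb)=b_{(0)}m_{(0)}\o b_{(1)}m_{(1)}$ together with Lemma 1.1(2) --- is the same computation the paper performs (note that in the paper's displayed calculation for (iii) the element $a$ must be taken in $A^{AcoH}$, not in all of $A$, for the step $a_{(0)}\o a_{(1)}=a\cdot 1'_{(0)}\o 1'_{(1)}$ to be legitimate; your version states this correctly).
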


\begin{proof}
(i) For all $m\in M^A,a\in A$,
\begin{align*}
&[(m\di a_{(0)})_{(0)}\o(m\di a_{(0)})_{(1)}]\cdot(1\o S(a_{(1)}))\\
&=[m_{(0)}\di a_{(0)(0)}\o m_{(1)}a_{(0)(1)}]\cdot(1\o S(a_{(1)}))\\
&=m_{(0)}\di a_{(0)}\o m_{(1)}a_{(1)1}S(a_{(1)2})\\
&=m_{(0)}\di a_{(0)}\o m_{(1)}\varepsilon_t(a_{(1)})\\
&=m_{(0)}\di (1_{(0)}a)\o m_{(1)}1_{(1)} \\
&=(m_{(0)}\di 1_{(0)})\cdot a\o m_{(1)}1_{(1)}+(m_{(0)}\di a)\cdot 1_{(0)}\o m_{(1)}1_{(1)} \\
&=(m_{(0)}\di a)\cdot 1_{(0)}\o m_{(1)}1_{(1)}.
\end{align*}
Since $m\in M^A$, we have
$$(m_{(0)}\di a)\cdot 1_{(0)}\o m_{(1)}1_{(1)}=0.$$
Now taking the summands $\{m_{(1)}1_{(1)}\}$ to be linearly independent, we have $(m_{(0)}\di a)\cdot 1_{(0)}=0$ for each summand. Hence
$$(m_{(0)}\di a)\cdot 1_{(0)}\o 1_{(1)}=0.$$
Applying $id\o\varepsilon$ to the above identity, we obtain $m_{(0)}\di a=0$ for each summand $m_{(0)}$. That is, each summand $m_{(0)}$ belongs to $M^A$. So $M^A$ is an $H$-submodule of $M$.

(ii) It is straightforward by (i).

(iii) Clearly  $M^{AcoH}$ is a Lie $A^{AcoH}$-submodule of $M$.
For all $m\in M^{AcoH},a\in A$,
\begin{align*}
\r(m\cdot a)&=(m\cdot a)_{(0)}\o(m\cdot a)_{(1)}\\
&=m_{(0)}\cdot a_{(0)}\o m_{(1)} a_{(1)}\\
&=m\cdot1_{(0)} 1'_{(0)}a_{(0)}\o 1_{(1)} 1'_{(1)}\\
&=(m\cdot a)\cdot 1_{(0)}\o 1_{(1)}.
\end{align*}
Obviously $m\cdot a\in M^A$. Thus $m\cdot a\in M^{AcoH}$. Therefore $M^{AcoH}$ is an $A^{AcoH}$-submodule of $M$.
\end{proof}

\section{The first main result}
\def\theequation{3.\arabic{equation}}
\setcounter{equation} {0}

In what follows, we will say that a vector space $M$ is an $(\underline{A},H)$-comodule if $M$ is a Lie $A$-module, a right $H$-comodule and the relation (\ref{2b}) is satisfied. We denote by $_{\underline{A}}\mathcal{M}$ the category of Lie $A$-modules with Lie $A$-linear maps, and by $_{\underline{A}}\mathcal{M}^H$ the category of $(\underline{A},H)$-comodule with Lie $\underline{A}$-linear and $H$-colinear maps.

Throughout this section we always assume that the $H$-comodule Poisson algebra $A$ satisfies the following identity:
\begin{equation}\label{Pro}
\{a,1_{(0)}\}\o 1_{(1)}=0,\ \hbox{for all}\ a\in A.
\end{equation}

For a Poisson $A$-module $N$, define the subspace $N\hat{\o} H$ of $N\o H$ as follows:
\begin{align*}
N\hat{\o} H=\Big\{&\sum_i n_i\o h_i\in N\o H|\sum_i n_i\cdot 1_{(0)}\o h_i1_{(1)}=\sum_i n_i\o h_i,\hbox{and}\\
&\sum_i n_i\di1_{(0)}\o h_i1_{(1)}=0\Big\}.
\end{align*}
Actually $N\hat{\o} H$ could also be seen as a subspace of $(N\o H)\cdot(1_{(0)}\o1_{(1)})$ whose elements satisfy $\sum_i n_i\di1_{(0)}\o h_i1_{(1)}=0$.

\begin{lemma}\label{3a}
(i) Let $N$ be a Poisson $A$-module. Then $N\hat{\o} H$ is an $(\underline{A},H)$-comodule with the Lie $A$-action and $H$-coaction given by
\begin{align*}
&(n\o h)\di a=n\di a_{(0)}\o ha_{(1)},\\
&(n\o h)_{(0)}\o(n\o h)_{(1)}=n\o h_1\o h_2,
\end{align*}
for all $a\in A, n\o h\in N\hat{\o} H$.

(ii) Furthermore, if $H$ is commutative, then $N\hat{\o} H$ is a Poisson $(A, H)$-Hopf module with the $A$-action given by
$$(n\o h)\cdot a=n\cdot a_{(0)}\o ha_{(1)},$$
for all $a\in A, n\o h\in N\hat{\o} H$.
\end{lemma}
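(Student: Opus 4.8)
The plan is to verify, in order: (1) that the proposed Lie $A$-action and $H$-coaction are well defined, i.e.\ they send $N\hat{\otimes} H$ into $N\hat{\otimes} H$ (resp.\ into $(N\hat{\otimes} H)\otimes H$); (2) that they satisfy the Lie-module axiom, coassociativity and counitality; and (3) the compatibility (\ref{2b}); then in part (ii) to run the same programme for the associative action together with the two Poisson-module compatibilities and the Hopf-module compatibility. It is convenient to encode the two defining conditions of $N\hat{\otimes} H$ by the operators $P,Q\colon N\otimes H\to N\otimes H$, $P(n\otimes h)=n\cdot 1_{(0)}\otimes h1_{(1)}$ and $Q(n\otimes h)=n\diamond 1_{(0)}\otimes h1_{(1)}$, so that $N\hat{\otimes} H=\{x\mid P(x)=x,\ Q(x)=0\}$; first I would check that $P$ is idempotent using $\rho(1_A\cdot 1_A)=\rho(1_A)$. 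Well-definedness then amounts to showing each structure map preserves the two conditions $P(x)=x$ and $Q(x)=0$.

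The coaction side is the more formal part. Since $\Delta$ is multiplicative and the $A$-coaction is coassociative, $x\mapsto\sum_i n_i\otimes (h_i)_1\otimes(h_i)_2$ is a counital, coassociative coaction on $N\otimes H$; to see that it lands in $(N\hat{\otimes} H)\otimes H$ I would apply $P\otimes\mathrm{id}$ and reduce, via (\ref{1.2b'}) ($1_{(0)}\otimes 1_{(1)1}\otimes 1_{(1)2}=1_{(0)}\otimes 1_11_{(1)}\otimes 1_2$) together with the unit-absorption identity $\sum 1_{(0)}1'_{(0)}\otimes 1_{(1)}1'_{(1)}=\rho(1_A)$, back to the hypothesis $P(x)=x$; the $Q$-type condition is treated identically and vanishes because $Q(x)=0$. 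The compatibility (\ref{2b}) for $N\hat{\otimes} H$ is then immediate from coassociativity of $\rho$ on $A$ and multiplicativity of $\Delta$.

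The heart of the argument is the well-definedness of the Lie action $(n\otimes h)\diamond a=n\diamond a_{(0)}\otimes ha_{(1)}$. For the condition $P(x\diamond a)=x\diamond a$ I would push the fresh unit through the Poisson-module compatibility $(mb)\diamond c=(m\diamond c)b+m\{b,c\}$; the resulting bracket term carries the factor $\{a_{(0)},1'_{(0)}\}\otimes a_{(1)}1'_{(1)}=\{a,1_A\}_{(0)}\otimes\{a,1_A\}_{(1)}=0$ by (\ref{2a}) and $\{a,1_A\}=0$, the orderings matching automatically here. For the condition $Q(x\diamond a)=0$ I would instead rearrange $(n_i\diamond a_{(0)})\diamond 1'_{(0)}$ by the Lie-module axiom into $(n_i\diamond 1'_{(0)})\diamond a_{(0)}+n_i\diamond\{a_{(0)},1'_{(0)}\}$: the bracket term now appears in the order $h_ia_{(1)}1'_{(1)}$, which is exactly the configuration annihilated by the standing hypothesis (\ref{Pro}) (rewritten as $\sum\{a_{(0)},1'_{(0)}\}\otimes 1'_{(1)}\otimes a_{(1)}=0$), while the surviving term is killed by $Q(x)=0$. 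This is precisely the step where (\ref{Pro}) is indispensable.

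The main obstacle, and what I expect to be the most delicate point throughout, is the noncommutativity of $H$: repeatedly one must compare expressions differing only by the order of two $H$-factors. The sharpest instance is the Lie-module axiom $x\diamond\{a,a'\}=(x\diamond a)\diamond a'-(x\diamond a')\diamond a$, whose two sides, after expanding $(X\diamond a)\diamond a'$ and $x\diamond\{a,a'\}$ via (\ref{2a}) and the Lie-module law on $N$, differ exactly by the commutator term $\sum(n\diamond a'_{(0)})\diamond a_{(0)}\otimes h\,(a_{(1)}a'_{(1)}-a'_{(1)}a_{(1)})$; showing this vanishes on $N\hat{\otimes} H$ is the crux. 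Two devices are available: the identity (\ref{2a}) combined with antisymmetry of the bracket yields $\{a_{(0)},a'_{(0)}\}\otimes a_{(1)}a'_{(1)}=\{a_{(0)},a'_{(0)}\}\otimes a'_{(1)}a_{(1)}$, disposing of the bracket-indexed terms, and the absorption identities $\rho(a)=\rho(1_A)\rho(a)=\rho(a)\rho(1_A)$ together with commutativity of $A$ swap the order whenever the two $A$-factors occupy a single multiplied slot; the remaining reconciliation is then forced by combining $P(x)=x$ and $Q(x)=0$. In part (ii) the extra hypothesis that $H$ be commutative removes these mismatches outright, so that the associative action $(n\otimes h)\cdot a=n\cdot a_{(0)}\otimes ha_{(1)}$ is well defined (its unitality is exactly $P(x)=x$), associative and unital, and the two Poisson-module compatibilities and the Hopf-module compatibility reduce directly to the corresponding identities on $N$.
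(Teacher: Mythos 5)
Your plan is far more explicit than the paper's own proof, which verifies only the $P$-type condition for the coaction, declares the well-definedness of the Lie action ``easy to see'', and defers everything else to \cite[Lemma 2.1]{Gu}; and you do correctly isolate and kill the bracket terms via (\ref{2a}), $\{a,1_A\}=0$ and (\ref{Pro}). But precisely at the point you yourself flag as the crux, the argument has a genuine gap. (a) For well-definedness of the Lie action: applying the structure map to the hypothesis $P(x)=x$ yields
\begin{equation*}
\sum_i n_i\diamond a_{(0)}\otimes h_ia_{(1)}=\sum_i (n_i\cdot 1_{(0)})\diamond a_{(0)}\otimes h_i\,1_{(1)}a_{(1)},
\end{equation*}
whereas $P(x\diamond a)$ and $Q(x\diamond a)$ produce the factor $h_i\,a_{(1)}1_{(1)}$. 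None of your listed devices interchanges $a_{(1)}$ and $1_{(1)}$: the absorption identity $\rho(a)\rho(1_A)=\rho(1_A)\rho(a)$ swaps the $H$-legs only when the two $A$-legs are multiplied in a single slot, and here $a_{(0)}$ sits in the $\diamond$-slot while $1_{(0)}$ sits in a different ($\cdot$ or $\diamond$) slot. (b) More decisively, for the Lie-module axiom the leftover term $\sum_i(n_i\diamond a'_{(0)})\diamond a_{(0)}\otimes h_i(a_{(1)}a'_{(1)}-a'_{(1)}a_{(1)})$ cannot be ``forced to vanish by combining $P(x)=x$ and $Q(x)=0$'': those two conditions constrain only the interaction of $x$ with $\rho(1_A)$, while the commutator involves arbitrary legs $a_{(1)},a'_{(1)}$ of the coaction of $A$, about which they say nothing; and the antisymmetry trick applies only when $a_{(0)},a'_{(0)}$ occupy a single bracket slot, which they do not here.

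The conclusion to draw is that part (i) as you (and the paper) treat it requires a commutativity hypothesis on $H$ that is not supplied by the defining conditions of $N\hat{\otimes}H$. This is consistent with the rest of the paper: every downstream use of the lemma (Lemma \ref{3b}, Corollary \ref{3c}, Theorem \ref{3d}) imposes $H_s$ commutative, $H_s\subseteq Z(H)$, or $H$ commutative, and under full commutativity both (a) and (b) evaporate. So the honest completion of your plan is to add such a hypothesis rather than to assert that the ordering reconciliation is forced. Your treatment of part (ii), where $H$ is commutative, is unproblematic.
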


\begin{proof}
For all $a\in A, n\o h\in N\hat{\o} H$, first of all, it is easy to see that $(n\o h)\di a\in N\hat{\o} H$, and
\begin{align*}
&n\o h_1\o h_2\\
&=n\cdot1_{(0)}\o h_11_{(1)1}\o h_21_{(1)2}\\
&=n\cdot1_{(0)}\o h_11_11_{(1)}\o h_21_{2}\\
&=n\cdot1_{(0)}\o h_11_1\o h_2,
\end{align*}
that is, $(n\o h)_{(0)}\o(n\o h)_{(1)}\in N\hat{\o} H\o H$.

The rest of the proof is similar to that given in \cite[Lemma 2.1]{Gu}.
\end{proof}

From Lemma \ref{3a}, if $H$ is commutative, then $A\o H$ is a Poisson $(A, H)$-Hopf module: the $H$-coaction is $id_A\o\D_H$, the $A$-action and Lie $A$-action are given by
$$(a\o h)\cdot a'=a\cdot a'_{(0)}\o ha'_{(1)},\ (a\o h)\di a'=\{a,a'_{(0)}\}\o ha'_{(1)},$$
for all $a,a'\in A, h\in H$. Note that the identity (\ref{Pro}) implies $(a\o h)\di 1=\{a,1_{(0)}\}\o h1_{(1)}=0$.

\begin{lemma}\label{3b}
Let $H$ be a weak Hopf algebra with $H_s$ being commutative.

(i) Let $M$ be an $(\underline{A},H)$-comodule and $N$ a Poisson $A$-module. There exists a linear isomorphism
$$\gamma:Hom^H_{\underline{A}}(M,N\hat{\o} H)\rightarrow Hom_{\underline{A}}(M,N),\ f\mapsto(id\o\varepsilon)\circ f.$$

(ii) Let $H$ be commutative, $M$ a Poisson $(A, H)$-Hopf module and $N$ a Poisson $A$-module. There exists a linear isomorphism
$$\gamma:Hom^H_{\mathcal{P}A}(M,N\hat{\o} H)\rightarrow Hom_{\mathcal{P}A}(M,N),\ f\mapsto(id\o\varepsilon)\circ f.$$
\end{lemma}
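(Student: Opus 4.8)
The plan is to produce an explicit two-sided inverse of $\gamma$ and to verify the module- and comodule-compatibilities on both sides. For a Lie $A$-linear map $g\in\mathrm{Hom}_{\underline{A}}(M,N)$, define $\theta(g):M\to N\o H$ by $\theta(g)(m)=g(m_{(0)})\o m_{(1)}$. The first task is to see that $\theta(g)(m)$ lands in the subspace $N\hat{\o} H$. The first defining relation, $g(m_{(0)})\cdot 1_{(0)}\o m_{(1)}1_{(1)}=g(m_{(0)})\o m_{(1)}$, I would obtain by feeding the coaction of $M$ through the comodule-algebra identities (\ref{1.2b}) and (\ref{1.2b'}) exactly as in the proof of Lemma \ref{3a}; the second relation is cleaner, since using that $g$ is Lie $A$-linear and then (\ref{2b}) with $a=1_A$,
$$\sum g(m_{(0)})\di 1_{(0)}\o m_{(1)}1_{(1)}=(g\o id)\big((m\di 1_A)_{(0)}\o(m\di 1_A)_{(1)}\big)=0,$$
because $m\di 1_A=0$. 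Once $\theta(g)$ is seen to take values in $N\hat{\o} H$, its Lie $A$-linearity is immediate from (\ref{2b}) and the definition of the Lie action on $N\hat{\o} H$, while its $H$-colinearity is just coassociativity of the coaction of $M$ matched against the coaction $n\o h\mapsto n\o h_1\o h_2$ on $N\hat{\o} H$. I would also record that both $\gamma$ and $\theta$ are manifestly $k$-linear.

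The crux is the well-definedness of $\gamma$ itself, i.e. that $(id\o\varepsilon)\circ f$ is Lie $A$-linear for every $f\in\mathrm{Hom}^H_{\underline{A}}(M,N\hat{\o} H)$. Writing $f(m)=\sum_i n_i\o h_i\in N\hat{\o} H$ and using that $f$ is Lie $A$-linear, this reduces to the identity
$$\sum_i (n_i\di a_{(0)})\,\varepsilon(h_ia_{(1)})=\sum_i(n_i\di a)\,\varepsilon(h_i),\qquad a\in A.$$
In the classical Hopf case this is trivial because $\varepsilon$ is an algebra map; in the weak case $\varepsilon$ obeys only $\varepsilon(xyz)=\varepsilon(xy_1)\varepsilon(y_2z)$, so more work is needed. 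I would first replace $n_i\o h_i$ by $n_i\cdot 1_{(0)}\o h_i1_{(1)}$ using the first defining relation of $N\hat{\o} H$, then expand $(n_i\cdot 1_{(0)})\di a_{(0)}$ by the Poisson-module compatibility $(mb)\di a'=(m\di a')b+m\{b,a'\}$. This produces a bracket term carrying the factor $\{1_{(0)},a_{(0)}\}$ paired inside $\varepsilon$ with $1_{(1)}$, which vanishes by antisymmetry of the bracket together with the standing hypothesis (\ref{Pro}); the surviving term is then reassembled into the right-hand side using the weak counit identity above, the comodule-algebra relations (\ref{1.2b}), (\ref{1.2b'}), the counit axiom, and the commutativity of $H_s$ to reorder the counital factors. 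I expect this reassembly to be the only genuinely delicate point of the whole lemma.

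It then remains to check that $\gamma$ and $\theta$ are mutually inverse, which is short. On one side, $\gamma(\theta(g))(m)=g(m_{(0)})\varepsilon(m_{(1)})=g\big(m_{(0)}\varepsilon(m_{(1)})\big)=g(m)$ by $k$-linearity of $g$ and the counit axiom of the comodule $M$. On the other side, $\theta(\gamma(f))(m)=(id\o\varepsilon)(f(m_{(0)}))\o m_{(1)}$; applying the $H$-colinearity of $f$ together with the coaction on $N\hat{\o} H$, this equals $\sum_i n_i\varepsilon(h_{i1})\o h_{i2}=\sum_i n_i\o h_i=f(m)$, using the genuine counit law $\varepsilon(h_1)h_2=h$ of the coalgebra $H$. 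Hence $\gamma$ is a $k$-linear isomorphism with inverse $\theta$, which proves (i).

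For part (ii) the maps $\gamma$ and $\theta$ are literally the same, so everything above applies verbatim; the only extra point is to upgrade ``Lie $A$-linear'' to ``Poisson $A$-linear''. Since $H$ is now commutative, Lemma \ref{3a}(ii) equips $N\hat{\o} H$ with the associative action $(n\o h)\cdot a=n\cdot a_{(0)}\o ha_{(1)}$, and the verification that $\theta(g)$ and $(id\o\varepsilon)\circ f$ also respect this action runs parallel to the Lie case, the bracket-killing step via (\ref{Pro}) being replaced by the straightforward compatibility coming from $\rho$ being an algebra map. The hardest step remains the weak-counital identity displayed above; everything else is bookkeeping with the idempotent $1_{(0)}\o 1_{(1)}$.
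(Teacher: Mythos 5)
Your treatment of the forward direction (that $\gamma(f)=(id\o\varepsilon)\circ f$ is Lie $A$-linear) identifies the right difficulty and sketches essentially the computation the paper performs: rewrite $\varepsilon(h_ia_{(1)})$ via the weak counit axiom and (\ref{1.2b'}), pull the idempotent $1_{(0)}\o1_{(1)}$ into the argument of $\di$, expand with the Poisson-module compatibility, and kill the unwanted terms using $\sum_i n_i\di 1_{(0)}\o h_i1_{(1)}=0$ and (\ref{Pro}). That part is sound. Your verification of the second defining relation of $N\hat{\o}H$ for the candidate inverse, via $\rho(m\di 1_A)=0$, is also correct.

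The genuine gap is in the definition of the inverse. You set $\theta(g)(m)=g(m_{(0)})\o m_{(1)}$ and assert that the first defining relation of $N\hat{\o}H$, namely $g(m_{(0)})\cdot 1_{(0)}\o m_{(1)}1_{(1)}=g(m_{(0)})\o m_{(1)}$, follows ``exactly as in the proof of Lemma \ref{3a}.'' It does not: the computation in Lemma \ref{3a} \emph{starts} from an element already known to lie in $N\hat{\o}H$, i.e.\ it assumes precisely the relation you need to prove. Your relation couples the associative $A$-action on $N$ (applied to $g(m_{(0)})$) with the $H$-coaction on $M$, and nothing in the hypotheses controls this: in part (i) the object $M$ carries no associative $A$-action at all, $g$ is only Lie $A$-linear, and in a genuinely weak Hopf algebra $1_{(0)}\o 1_{(1)}\neq 1_A\o 1_H$, so the factor $\cdot\,1_{(0)}$ is a nontrivial operation on $N$ that cannot be absorbed. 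Without this relation, $\theta(g)$ is not known to land in $N\hat{\o}H$, so surjectivity of $\gamma$ is not established. The paper avoids the problem by defining the inverse as $\gamma'(g)(m)=g(m_{(0)})\cdot 1_{(0)}\o m_{(1)}1_{(1)}$, i.e.\ by multiplying by the idempotent $\rho_A(1)$ from the outset; the first condition then holds automatically because $\rho_A(1)^2=\rho_A(1)$, and only the second ($\di$-vanishing) condition needs the computation you gave. You should adopt this corrected formula (your checks of $\gamma\circ\gamma'=id$ and $\gamma'\circ\gamma=id$ then go through, the latter using the first defining relation of $N\hat{\o}H$ for elements in the image of $f$), and note that colinearity of $\gamma'(g)$ then requires the small idempotent manipulation from Lemma \ref{3a} rather than bare coassociativity.
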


\begin{proof}
(i) By Lemma \ref{3a}, $N\hat{\o} H$ is an $(\underline{A},H)$-comodule. For $f\in Hom^H_{\underline{A}}(M,N\hat{\o} H)$, we write $f(m)=\sum_i(n_i\o h_i)$. Then $\gamma(f)(m)=\sum_in_i\varepsilon(h_i)$. For $a\in A$, we have
$$f(m\di a)=f(m)\di a=\sum_in_i\di a_{(0)}\o h_ia_{(1)}.$$
Since
\begin{align*}
\gamma(f)(m\di a)&=\sum_in_i\di a_{(0)}\varepsilon(h_ia_{(1)})\\
&=\sum_in_i\di 1_{(0)}a_{(0)}\varepsilon(h_i1_2)\varepsilon(1_11_{(1)}a_{(1)})\\
&\stackrel{(\ref{1.2b'})}=\sum_in_i\di 1_{(0)}a_{(0)}\varepsilon(h_i1_{(1)2})\varepsilon(1_{(1)1}a_{(1)})\\
&=\sum_in_i\di 1_{(0)(0)}a_{(0)}\varepsilon(h_i1_{(1)})\varepsilon(1_{(0)(1)}a_{(1)})\\
&=\sum_in_i\di (1_{(0)}a)_{(0)}\varepsilon(h_i1_{(1)})\varepsilon((1_{(0)}a)_{(1)})\\
&=\sum_in_i\di (1_{(0)}a)\varepsilon(h_i1_{(1)})\\
&=\sum_i(n_i\di 1_{(0)})\cdot a\varepsilon(h_i1_{(1)})+\sum_i(n_i\di a)\cdot 1_{(0)}\varepsilon(h_i1_{(1)})\\
&=\sum_i(n_i\cdot 1_{(0)})\di a\varepsilon(h_i1_{(1)})+n_i\{a,1_{(0)}\}\varepsilon(h_i1_{(1)})\\
&=(\sum_in_i\varepsilon(h_i))\di a,
\end{align*}
where the last two identities hold due to $\sum_in_i\di 1_{(0)}\o h_i1_{(1)}=0$ and the identity (\ref{Pro}), we obtain that $\gamma(f)$ is Lie $A$-linear.

For $g\in Hom_{\underline{A}}(M,N)$, define $\gamma'(g):M\rightarrow N\o H$ by
$$\gamma'(g)(m)=g(m_{(0)})\cdot 1_{(0)}\o m_{(1)}1_{(1)},$$
for all $m\in M$. Moreover
\begin{align*}
&(g(m_{(0)})\cdot 1_{(0)})\di 1'_{(0)}\o m_{(1)}1_{(1)}1'_{(1)}\\
&=(g(m_{(0)})\di 1'_{(0)})\cdot 1_{(0)}\o m_{(1)}1_{(1)}1'_{(1)}+g(m_{(0)})\cdot \{1_{(0)}, 1'_{(0)}\}\o m_{(1)}1_{(1)}1'_{(1)}\\
&=g(m_{(0)}\di 1'_{(0)})\cdot 1_{(0)}\o m_{(1)}1'_{(1)}1_{(1)}+g(m_{(0)})\cdot \{1, 1'\}_{(0)}\o m_{(1)}\{1, 1'\}_{(1)}\\
&=g((m\di 1)_{(0)})\cdot 1_{(0)}\o (m\di 1)_{(1)}1_{(1)}\\
&=0,
\end{align*}
thus $Im\gamma'(g)\subseteq N\hat{\o} H$.

By a similar argument as in \cite{Gu}, we get that $\gamma'(g)$ is a morphism in $Hom^H_{\underline{A}}(M,N\hat{\o} H)$, and $\gamma$ is a bijective.

(ii) By Lemma \ref{3a}, $N\hat{\o} H$ is a Poisson $(A,H)$-module. One can easily check that $\gamma(f)$ and $\gamma'(g)$ are $A$-linear.
\end{proof}

\begin{corollary}\label{3c}
(i) Let $H$ be a weak Hopf algebra and $H_s\subseteq Z(H)$. If $N$ is an injective Poisson $A$-module, then $N\hat{\o} H$ is an injective $(\underline{A}, H)$-comodule.

(ii) Let $H$ commutative. If $N$ is an injective Poisson $A$-module, then $N\hat{\o} H$ is an injective Poisson $(A, H)$-Hopf module.
\end{corollary}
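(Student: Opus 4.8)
The plan is to read off the injectivity of $N\hat{\o}H$ directly from the adjunction packaged in Lemma \ref{3b}, via the standard principle that the right adjoint of a monomorphism-preserving functor sends injective objects to injective objects; beyond a routine naturality check no computation should be needed. Concretely, in case (i) let $U:{}_{\underline{A}}\mathcal{M}^H\to{}_{\underline{A}}\mathcal{M}$ be the functor forgetting the $H$-coaction, and let $G=(-)\hat{\o}H$ send a Poisson $A$-module $N$ to the $(\underline{A},H)$-comodule $N\hat{\o}H$ produced in Lemma \ref{3a}. Lemma \ref{3b}(i) supplies, for each $M$, a linear bijection $\gamma:Hom^H_{\underline{A}}(M,N\hat{\o}H)\to Hom_{\underline{A}}(U(M),N)$ with explicit inverse $\gamma'$. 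To promote this to an adjunction $U\dashv G$ I would check that $\gamma$ is natural in $M$: for $\psi:M'\to M$ in ${}_{\underline{A}}\mathcal{M}^H$ and $f:M\to N\hat{\o}H$ one gets $\gamma(f\circ\psi)=(\mathrm{id}\o\v)\circ f\circ\psi=\gamma(f)\circ U(\psi)$, which is immediate from the formula $\gamma=(\mathrm{id}\o\v)\circ(-)$; passing to inverses yields the matching naturality $\gamma'(g\circ U(\psi))=\gamma'(g)\circ\psi$. Case (ii) is identical with $U:\mathcal{M}^H_{\mathcal{P}A}\to\mathcal{M}_{\mathcal{P}A}$ and $\gamma$ from Lemma \ref{3b}(ii).

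Next I would record that $U$ preserves monomorphisms: a morphism in either $H$-comodule category is monic exactly when it is injective on underlying vector spaces, a property untouched by discarding the coaction, so $U$ carries monomorphisms to monomorphisms. The injectivity of $N\hat{\o}H=G(N)$ then follows by the usual lifting diagram. Given a monomorphism $\iota:M_1\hookrightarrow M_2$ in the comodule category and a morphism $f:M_1\to N\hat{\o}H$, I would pass to $\gamma(f):U(M_1)\to N$; since $U(\iota)$ is monic and $N$ is injective in the target of $U$, there exists $g:U(M_2)\to N$ with $g\circ U(\iota)=\gamma(f)$; applying $\gamma'$ and using its naturality gives $\gamma'(g)\circ\iota=\gamma'(g\circ U(\iota))=\gamma'(\gamma(f))=f$, so $\gamma'(g)$ is the required extension of $f$ along $\iota$. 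The hypothesis $H_s\subseteq Z(H)$ in (i) (respectively $H$ commutative in (ii)) enters only to make Lemma \ref{3b} applicable.

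The single point that needs care—really the only non-formal issue—is bookkeeping of which injectivity of $N$ the lifting step actually consumes. In (ii) the target of $U$ is $\mathcal{M}_{\mathcal{P}A}$, so injectivity of $N$ as a Poisson $A$-module is precisely what is fed in, matching the hypothesis verbatim. In (i), by contrast, the target of $U$ is the category ${}_{\underline{A}}\mathcal{M}$ of Lie $A$-modules, so the argument genuinely invokes the injectivity of $N$ in ${}_{\underline{A}}\mathcal{M}$; I would therefore either read the hypothesis of (i) in this Lie sense, or insert the remark that the Poisson-injective $N$ is being used through its underlying Lie $A$-module structure, so that the correct injectivity is available in the correct category. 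Everything outside this matching is pure adjoint-functor formalism and carries over to both parts simultaneously.
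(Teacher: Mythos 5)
Your argument is correct and is exactly the intended one: the paper's proof simply declares the result an immediate consequence of the preceding lemma (its citation of Lemma \ref{3a} there is evidently a slip for Lemma \ref{3b}, whose Hom-isomorphism $\gamma$, $\gamma'$ is precisely what your adjunction-plus-lifting argument unpacks). Your closing caveat is also well taken: part (i) really consumes injectivity of $N$ in ${}_{\underline{A}}\mathcal{M}$ rather than in $\mathcal{M}_{\mathcal{P}A}$, and this reading is confirmed by the way Theorem \ref{3d}(i) invokes the corollary for a module that is injective as a Lie $A$-module.
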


\begin{proof}
The result is an immediate consequence of Lemma \ref{3a}.
\end{proof}

\begin{theorem}\label{3d}
Let $H$ be a weak Hopf algebra satisfying $\Delta(1)=\Delta^{op}(1)$ and $H_s\subseteq Z(H)$. Assume that there exists an $H$-colinear map $\phi:H\rightarrow A^A$ such that $\phi(1)=1$ and $1_{(0)}\phi(1_{(1)})=1$.

(i) If $\phi$ is an algebra map, then every Poisson $(A, H)$-Hopf module which is injective as a Lie $A$-module is an injective $(\underline{A}, H)$-comodule.

(ii) If $H$ is commutative and $\phi$ is an algebra map, then every Poisson $(A, H)$-Hopf module which is injective as a Poisson $A$-module is an injective Poisson $(A, H)$-Hopf module.
\end{theorem}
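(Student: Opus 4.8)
The plan is to prove injectivity of a Poisson $(A,H)$-Hopf module $M$ (injective as a Lie $A$-module) by exhibiting it as a direct summand, inside the category $_{\underline{A}}\mathcal{M}^H$, of an object of the form $N\hat{\o}H$ with $N$ injective as a Poisson $A$-module, and then invoking Corollary \ref{3c}(i). The hypotheses are engineered exactly for this: the colinear algebra map $\phi:H\to A^A$ with $\phi(1)=1$ and $1_{(0)}\phi(1_{(1)})=1$ should let me transport the $H$-coaction into an $A$-action in a way compatible with both the associative and Lie structures, while $H_s\subseteq Z(H)$ and $\Delta(1)=\Delta^{op}(1)$ provide the commutativity needed to make the relevant maps morphisms of comodules.

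First I would embed $M$ into a suitable $N\hat{\o}H$. The natural candidate is $N=M$ itself, viewed as a Poisson $A$-module, and the map $\rho_M:M\to M\hat{\o}H$ given by the $H$-coaction $m\mapsto m_{(0)}\o m_{(1)}$; using Lemma \ref{lem1} and the comodule axioms one checks the image lands in $M\hat{\o}H$ (i.e. the two defining relations $\sum m_{(0)}\c 1_{(0)}\o m_{(1)}1_{(1)}=\sum m_{(0)}\o m_{(1)}$ and $\sum m_{(0)}\di 1_{(0)}\o m_{(1)}1_{(1)}=0$ hold by coassociativity and relation (\ref{2b})). I would verify $\rho_M$ is a morphism in $_{\underline{A}}\mathcal{M}^H$: $H$-colinearity is coassociativity, and Lie $A$-linearity is precisely (\ref{2b}). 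Then I would construct a retraction $\pi:M\hat{\o}H\to M$ in the same category using $\phi$, setting $\pi(m\o h)=m_{(0)}\di^{?}$ — more precisely a contraction of the form $\pi(m\o h)=(\text{action of }\phi(h)\text{ against }m)$; the condition $\phi(1)=1$, $1_{(0)}\phi(1_{(1)})=1$ guarantees $\pi\circ\rho_M=\mathrm{id}_M$. Here the defining relation of $N\hat{\o}H$ and the centrality $H_s\subseteq Z(H)$ are used to show $\pi$ is well-defined on the subspace $M\hat{\o}H$ and respects the Lie $A$-action.

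The main obstacle will be checking that the retraction $\pi$ is genuinely Lie $A$-linear and $H$-colinear, not merely $k$-linear. Because $\phi$ lands in the Poisson center $A^A$ and is an algebra map, the associative compatibility is clean; but the Lie action mixes $\di$ and the bracket via the compatibility conditions $m\di(aa')=(m\di a)a'+(m\di a')a$ and $(ma)\di a'=(m\di a')a+m\{a,a'\}$, so I expect to lean heavily on $\phi(H)\subseteq A^A$ (so that brackets against $\phi(h)$ vanish) together with the hypothesis (\ref{Pro}) and relation (\ref{2b}) to kill the unwanted cross terms. The symmetry hypothesis $\Delta(1)=\Delta^{op}(1)$ should be what lets me commute the two legs of $\Delta(1)$ when verifying colinearity of $\pi$.

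Having established that $M$ is a direct summand of $M\hat{\o}H$ in $_{\underline{A}}\mathcal{M}^H$, injectivity transfers: if $M$ is injective as a Lie $A$-module then $M\hat{\o}H$ is an injective $(\underline{A},H)$-comodule by Corollary \ref{3c}(i), and a direct summand (retract) of an injective object is injective. This gives (i). For part (ii), I would run the identical argument in the category $\mathcal{M}^H_{\mathcal{P}A}$ instead of $_{\underline{A}}\mathcal{M}^H$, using Lemma \ref{3a}(ii) and Corollary \ref{3c}(ii): when $H$ is commutative, $N\hat{\o}H$ is a full Poisson $(A,H)$-Hopf module, the same $\rho_M$ and $\pi$ are now checked to be $A$-linear as well (this is where commutativity of $H$ removes the obstruction to the associative-module structure on $N\hat{\o}H$), and the splitting lemma again yields injectivity of $M$ as a Poisson $(A,H)$-Hopf module.
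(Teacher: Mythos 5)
Your overall strategy coincides with the paper's: exhibit $M$ as a direct summand of $M\hat{\o}H$ inside $_{\underline{A}}\mathcal{M}^H$ via the coaction $\rho_M$, and invoke Corollary \ref{3c}. The embedding half is fine, and your description of how the image lands in $M\hat{\o}H$ (namely $\rho(m\cdot 1)=\rho(m)$ and $\rho(m\di 1)=0$) is correct. The gap is at the retraction, which is the entire technical content of the theorem. The formula you gesture at, ``the action of $\phi(h)$ against $m$,'' i.e.\ $\pi(m\o h)=m\cdot\phi(h)$, cannot work: it would require $m_{(0)}\cdot\phi(m_{(1)})=m$, which already fails in the model case $A=H$, $\phi=\mathrm{id}$. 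The retraction must re-expand the coaction on $m$ and insert the antipode,
$$\lambda(m\o h)=m_{(0)}\cdot\phi\bigl(S(m_{(1)})h\bigr),$$
so that $\lambda(\rho_M(m))=m_{(0)}\cdot\phi(\varepsilon_s(m_{(1)}))$; it is precisely here --- not in the colinearity check, as you guessed --- that the normalization $1_{(0)}\phi(1_{(1)})=1$ together with $\Delta(1)=\Delta^{op}(1)$ is consumed to recover $m$. Likewise, the Lie $A$-linearity of $\lambda$ does use $\phi(H)\subseteq A^A$ to kill bracket terms, but it also needs $\phi$ to be an algebra map (to factor $\phi(S(m_{(1)}1_{(1)})h)$ as $1_{(0)}\phi(1_{(1)})\cdot\phi(S(m_{(1)})h)$) and a commutation identity for $H_s$; none of this can be extracted from the sketch as written. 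Without the explicit $\lambda$ and these verifications the proof is incomplete at its central step.

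Two smaller corrections of emphasis: the hypothesis $H_s\subseteq Z(H)$ enters through Lemma \ref{3b} and Corollary \ref{3c} (the adjunction that makes $N\hat{\o}H$ injective), not primarily through the colinearity of the retraction; and the identity (\ref{Pro}) plays no role in the retraction itself --- it is a standing assumption needed earlier to make $A\o H$ and the functor $N\mapsto N\hat{\o}H$ behave. Part (ii) is then handled exactly as you describe, by checking in addition that $\lambda$ is $A$-linear when $H$ is commutative.
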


\begin{proof}
(i) Let $M$ be a Poisson $(A, H)$-Hopf module. Define $\lambda:M\hat{\o}H\rightarrow M$ by
$$\lambda(m\o h)=m_{(0)}\cdot\phi(S(m_{(1)})h),$$
for all $m\in M,h\in H$. Since
\begin{align*}
(\lambda\circ\r)(m)&=\lambda(m_{(0)}\o m_{(1)})\\
&=m_{(0)}\cdot\phi(S(m_{(1)1})m_{(1)2})\\
&=m_{(0)}\cdot\phi(\varepsilon_s(m_{(1)}))\\
&=m_{(0)}\cdot\varepsilon(m_{(1)}1_2)\phi(1_1)\\
&=m\cdot1_{(0)}\varepsilon(1_{(1)}1_2)\phi(1_1)\\
&=m\cdot1_{(0)}\varepsilon(1_{(1)}1_1)\phi(1_2)\\
&=m\cdot1_{(0)}\varepsilon(1_{(1)1})\phi(1_{(1)2})\\
&=m\cdot1_{(0)}\phi(1_{(1)})\\
&=m,
\end{align*}
thus $\lambda\circ\r=id_M$. And
\begin{align*}
&\lambda(m\o h)_{(0)}\o\lambda(m\o h)_{(1)}\\
&=[m_{(0)}\cdot\phi(S(m_{(1)})h)]_{(0)}\o[m_{(0)}\cdot\phi(S(m_{(1)})h)]_{(1)}\\
&=m_{(0)}\cdot\phi(S(m_{(1)2})h)_{(0)}\o m_{(1)1}\phi(S(m_{(1)2})h)_{(1)}\\
&=m_{(0)}\cdot\phi(S(m_{(1)2})_1h_1)\o m_{(1)1}S(m_{(1)2})_2h_{2}\\
&=m_{(0)}\cdot\phi(S(m_{(1)3})h_1)\o m_{(1)1}S(m_{(1)2})h_{2}\\
&=m_{(0)}\cdot\phi(S(m_{(1)2})h_1)\o \varepsilon_t(m_{(1)1})h_2\\
&\stackrel{(\ref{1.2b})}=m_{(0)}\cdot\phi(S(1_2m_{(1)})h_1)\o S(1_1)h_2\\
&=m_{(0)}\cdot\phi(S(m_{(1)})1_1h_1)\o 1_2h_2\\
&=m_{(0)}\cdot\phi(S(m_{(1)})h_1)\o h_2\\
&=\lambda((m\o h)_{(0)})\o(m\o h)_{(1)},
\end{align*}
hence $\lambda$ is $H$-colinear. Finally on one hand,
\begin{align*}
\lambda((m\o h)\di a)&=\lambda(m\di a_{(0)}\o ha_{(1)})\\
&=(m\di a_{(0)})_{(0)}\cdot\phi(S((m\di a_{(0)})_{(1)})ha_{(1)})\\
&=(m_{(0)}\di a_{(0)})\cdot\phi(S(m_{(1)} a_{(1)1})ha_{(1)2})\\
&=(m_{(0)}\di a_{(0)})\cdot\phi(S(a_{(1)1})S(m_{(1)})ha_{(1)2})\\
&=(m_{(0)}\di a_{(0)})\cdot\phi(S(m_{(1)})h\varepsilon_s(a_{(1)}))\\
&=(m_{(0)}\di a_{(0)})\cdot\phi(S(m_{(1)})h1_1)\varepsilon(a_{(1)}1_2)\\
&=(m_{(0)}\di (a_{(0)}1_{(0)}))\cdot\phi(S(m_{(1)})h1_2)\varepsilon(a_{(1)}1_{(1)}1_1)\\
&=(m_{(0)}\di (a_{(0)}1_{(0)}))\cdot\phi(S(m_{(1)})h1_{(1)2})\varepsilon(a_{(1)}1_{(1)1})\\
&=(m_{(0)}\di (a1_{(0)}))\cdot\phi(S(m_{(1)})h1_{(1)})\\
&=(m_{(0)}\di (1_{(0)}a))\cdot\phi(S(m_{(1)})S(1_{(1)})h)\\
&=(m_{(0)}\di 1_{(0)})\cdot a\phi(S(m_{(1)}1_{(1)})h)+(m_{(0)}\di a)\cdot 1_{(0)}\phi(S(m_{(1)}1_{(1)})h)\\
&=(m_{(0)}\di a)\cdot 1_{(0)}\phi(1_{(1)})\phi(S(m_{(1)})h)\\
&=(m_{(0)}\di a)\cdot \phi(S(m_{(1)})h),
\end{align*}
where the tenth identity is true by \cite[Lemma 2.5]{Zhao}.
On the other hand,
\begin{align*}
\lambda(m\o h)\di a&=(m_{(0)}\cdot\phi(S(m_{(1)}h)))\di a\\
&=(m_{(0)}\di a)\cdot\phi(S(m_{(1)}h))+m_{(0)}\cdot\{\phi(S(m_{(1)}h)), a\}\\
&=(m_{(0)}\di a)\cdot\phi(S(m_{(1)}h)).
\end{align*}
It implies that $\lambda$ is a Lie $A$-linear map. Therefore $\lambda$ is a morphism in $_{\underline{A}}\mathcal{M}^H$. By Corollary \ref{3c}, $M\hat{\o }H$ is an injective $(\underline{A}, H)$-comodule. It is obvious that $\r_M:M\rightarrow M\hat{\o} H$ is $H$-colinear and Lie $A$-linear, and $\r_M$
is an injective map. So $M$ is a direct summand of $M\hat{\o }H$ as an $(\underline{A}, H)$-comodule. This implies that $M$  is an injective $(\underline{A}, H)$-comodule, being a direct summand of the injective $(\underline{A}, H)$-comodule $M\hat{\o }H$.

%By similar arguments as in the proof of \cite[Theorem 2.4]{Gu}, we conclude that an injective Lie $A$-module is also an injective $(\underline{A},H)$-comodule.

(ii) By a similar computation as above, we obtain that $\lambda$ given in (1) is $A$-linear. Thus $\lambda$ is a homomorphism of Poisson $(A, H)$-Hopf modules. By the same arguments as above, we conclude that an injective Lie $A$-module is also an injective Poisson $(A,H)$-Hopf module.
\end{proof}

\begin{remark}
(i) Compared with \cite[Theorem 2.4]{Gu}, the condition that $\phi$ is an algebra map is necessary in Theorem \ref{3d}.

(ii) In \cite{Jia} the authors introduced the definition of weak total integral of weak Hopf group coalgebras. When reduced to weak Hopf algebras, the total integral $\phi:H\rightarrow A$ is an $H$-comodule map satisfying $1_{(0)}\phi(S(1_{(1)}))=1$, where $A$ is a right $H$-comodule algebra. Now when $\Delta(1)=\Delta^{op}(1)$, this identity could be rewritten as $1_{(0)}\phi(1_{(1)})=1$, which makes sense in Theorem \ref{3d}.
\end{remark}

\section{The second main result}
\def\theequation{4.\arabic{equation}}
\setcounter{equation} {0}

\begin{lemma}\label{lem2}
Set $B=A^{AcoH}$. Let $M$ be a Poisson $B$-module. Then $M\o_BA$ is a Poisson $(A,H)$-Hopf module, where the $A$-action is trivial, the Lie $A$-action is given by
$$(m\o_B a)\di a'=m\o_B a\di a',$$
for all $a,a'\in A,m\in M$, and the $H$-coaction is given by
$$\r_{M\o_BA}(m\o a)=m\o_B a_{(0)}\o a_{(1)}.$$
\end{lemma}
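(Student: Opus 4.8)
The goal is to verify that $M\o_B A$ carries all the structures required of a Poisson $(A,H)$-Hopf module, so the plan is to check, in order, that (a) the Lie $A$-action and the $A$-module action are well defined on the balanced tensor product over $B$, (b) these make $M\o_B A$ a Poisson $A$-module, (c) $\r_{M\o_B A}$ is a well-defined coassociative, counital $H$-coaction, and (d) the three compatibility relations relating the actions to the coaction hold, namely the $(A,H)$-Hopf module condition on the $A$-action and the Poisson-Hopf compatibility \eqref{2b} on the Lie action. The statement says the $A$-action is ``trivial'', which I read as $(m\o_B a)\c a' = m\o_B aa'$ using the (commutative associative) multiplication of $A$ on the right tensor factor; I would first pin this down, since the coaction and the Lie action must be checked for compatibility with it.

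First I would address well-definedness, which is the crux of the whole lemma. Since $B=A^{AcoH}$ acts on $M$ as a Poisson $B$-module and sits inside $A$ as a Poisson subalgebra (Lemma \ref{lem1}(ii)), I must show that for $b\in B$ the proposed operations descend to the quotient $M\o_B A$, i.e. that $(mb)\o_B a$ and $m\o_B (ba)$ are identified by each operation. For the $A$-action this is immediate from associativity and commutativity of $A$. For the Lie action $(m\o_B a)\di a' = m\o_B\{a,a'\}$ I would use that elements of $B$ are Lie-central in $A$ (they lie in $A^A$), together with the Leibniz rule \eqref{1.2d}, to move $b$ across the bracket; the Poisson $B$-module axioms on $M$ handle the terms where the bracket lands on $b$. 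For the coaction I would use that $b\in B\subseteq A^{coH}$ so $\r(ba)=\r(b)\r(a)$ with $b_{(0)}\o b_{(1)} = b\c 1_{(0)}\o 1_{(1)}$ (the $A^{coH}$ description), which lets the $B$ pass through $\r$ correctly; here I expect to invoke \eqref{1.2b} / \eqref{1.2b'} to absorb the $1_{(0)}\o 1_{(1)}$ factors.

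Next I would verify the Poisson $A$-module axioms for $M\o_B A$: that $\di$ is a Lie $A$-action, and the two compatibility identities between $\c$ and $\di$. These reduce to the corresponding identities for the Poisson bracket of $A$ on its second tensor factor, since the right-hand factor is just $A$ acting on itself by multiplication and bracket, so they follow from the Jacobi identity and Leibniz rule \eqref{1.2d} in $A$; these are routine and I would not grind through them. Coassociativity and counitality of $\r_{M\o_B A}$ follow directly from $(\r\o id)\r=(id\o\D)\r$ \eqref{1.2a} and the counit axiom applied to the $A$-factor.

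The main obstacle I anticipate is not these formal checks but the well-definedness over $B$ for the Lie action and the coaction simultaneously, because $B$ is defined by the two-sided condition $A^A\cap A^{coH}$ and one must use \emph{both} halves: Lie-centrality to pass $b$ through the bracket, and coinvariance to pass $b$ through $\r$. A secondary subtlety is checking the $(A,H)$-Hopf compatibility for the trivial $A$-action and the Poisson-Hopf condition \eqref{2b} for the Lie action; the latter requires $\r(m\o_B\{a,a'\})$ to match $m\o_B\{a_{(0)},a'_{(0)}\}\o a_{(1)}a'_{(1)}$, which is exactly where the comodule-Poisson-algebra axiom \eqref{2a} of $A$ enters. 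I would single out \eqref{2a} as the identity that makes the Lie action $H$-colinear in the required Hopf-module sense, and treat its application as the one genuinely structural step; everything else is bookkeeping modeled on the proof of the analogous statement in \cite{Gu}.
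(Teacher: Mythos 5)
The paper states Lemma \ref{lem2} without proof, so there is no argument of the authors' to compare against; judged on its own, your outline is correct and complete. You identify exactly the points that need checking and the right inputs for each: well-definedness over $B$ of the Lie action via Lie-centrality ($B\subseteq A^A$) and of the coaction via coinvariance ($B\subseteq A^{coH}$, using $\r(b)=b\cdot 1_{(0)}\o 1_{(1)}$ and $\r(1)\r(a)=\r(a)$), the Poisson $A$-module axioms reducing to the Jacobi identity and \eqref{1.2d} on the right tensor factor, the Hopf-module compatibility from \eqref{1.2c}, and the Poisson--Hopf compatibility \eqref{2b} from the comodule Poisson algebra axiom \eqref{2a}. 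Your reading of the ``trivial'' $A$-action as $(m\o_B a)\cdot a'=m\o_B aa'$ is the intended one.
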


\begin{lemma}\label{lem3}
Set $B=A^{AcoH}$. Let $M$ be a Poisson $(A,H)$-Hopf module. Then the linear map $\a:M^{AcoH}\o_BA\rightarrow M,\ m\o_Ma\mapsto m\cdot a$ is a homomorphism of Poisson $(A,H)$-Hopf modules.
\end{lemma}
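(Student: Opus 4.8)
The plan is to recognize $N:=M^{AcoH}$ as a Poisson $B$-module, which is precisely the content of Lemma \ref{lem1}(iii), so that by Lemma \ref{lem2} the source $M^{AcoH}\o_B A$ is already a Poisson $(A,H)$-Hopf module, with the structure maps $(n\o_B a)\cdot a'=n\o_B aa'$, $(n\o_B a)\di a'=n\o_B\{a,a'\}$ and $\r(n\o_B a)=n\o_B a_{(0)}\o a_{(1)}$. It then suffices to verify that $\a(m\o_B a)=m\cdot a$ is well defined and compatible with each of these four structures (the associative $A$-action, the Lie $A$-action and the $H$-coaction).

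Well-definedness and $A$-linearity I expect to be immediate from associativity of the $A$-action on $M$. Since $M^{AcoH}$ is a $B$-submodule of $M$ with $B$ acting by the restriction of the $A$-action (Lemma \ref{lem1}(iii)), for $b\in B$ we get $\a(mb\o_B a)=(m\cdot b)\cdot a=m\cdot(ba)=\a(m\o_B ba)$, so $\a$ descends to the balanced tensor product; and $\a((m\o_B a)\cdot a')=m\cdot(aa')=(m\cdot a)\cdot a'=\a(m\o_B a)\cdot a'$ gives $A$-linearity.

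For the Lie $A$-action I would invoke the Poisson-module compatibility $(m\cdot a)\di a'=(m\di a')\cdot a+m\cdot\{a,a'\}$ together with the defining inclusion $M^{AcoH}\subseteq M^A$, which forces $m\di a'=0$. Hence $\a(m\o_B a)\di a'=(m\cdot a)\di a'=m\cdot\{a,a'\}=\a(m\o_B\{a,a'\})=\a((m\o_B a)\di a')$, so $\a$ is Lie $A$-linear. The vanishing $m\di a'=0$ is the crucial point that makes this step collapse cleanly.

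The one place I anticipate real bookkeeping is $H$-colinearity. Starting from the $(A,H)$-Hopf module axiom, $\r_M(m\cdot a)=m_{(0)}\cdot a_{(0)}\o m_{(1)}a_{(1)}$. Because $m\in M^{AcoH}\subseteq M^{coH}$, I can substitute $m_{(0)}\o m_{(1)}=m\cdot 1_{(0)}\o 1_{(1)}$ from Lemma 1.1(2), turning the right-hand side into $m\cdot(1_{(0)}a_{(0)})\o 1_{(1)}a_{(1)}$. Finally, multiplicativity of the coaction applied to $1_A\cdot a=a$ yields the identity $1_{(0)}a_{(0)}\o 1_{(1)}a_{(1)}=a_{(0)}\o a_{(1)}$ in $A\o H$; pushing this through the linear map $x\o h\mapsto m\cdot x\o h$ collapses the unit factors and gives $\r_M(\a(m\o_B a))=m\cdot a_{(0)}\o a_{(1)}=(\a\o id)\r_{M^{AcoH}\o_B A}(m\o_B a)$. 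The delicate part is lining up the coinvariance relation with the comodule-algebra multiplicativity so that the spurious $1_{(0)},1_{(1)}$ disappear; once those two identities are matched the computation closes and $\a$ is a morphism in $\mathcal{M}^H_{\mathcal{P}A}$.
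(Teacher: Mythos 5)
Your proof is correct. The paper states Lemma \ref{lem3} without proof, and your verification (well-definedness over $\o_B$ from associativity, Lie $A$-linearity collapsing via $m\di a'=0$ for $m\in M^{AcoH}\subseteq M^A$, and $H$-colinearity via $m_{(0)}\o m_{(1)}=m\cdot 1_{(0)}\o 1_{(1)}$ combined with $\r(a)=\r(1)\r(a)$) is exactly the standard argument the authors evidently intend.
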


Consider $H$ as a right $H$-comodule via $\Delta$. Let $\phi:H\rightarrow A$ be a morphism of right $H$-comodule and and a morphism of algebra such that $\phi(1)=1$. For any $M\in\mathcal{M}^H_{\mathcal{P}A}$, define the $k$-linear map
$$p_M:M\rightarrow M,\ m\mapsto m_{(0)}\cdot\phi(S(m_{(1)})).$$
Since $\phi$ is a morphism of right $H$-comodule, clearly
\begin{equation}\label{2.1}
1_{(0)}\o1_{(1)}=\phi(1_1)\o1_2.
\end{equation}

\begin{lemma}\label{lem4}
With $M$ and $p_M$ defined as above. We have
\begin{enumerate}
  \item [(i)] $p_M(M)=M^{coH}$,
  \item [(ii)] $p_M\circ p_M=p_M$.
\end{enumerate}
\end{lemma}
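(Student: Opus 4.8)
The plan is to prove that $p_M$ is an idempotent endomorphism of $M$ whose image is precisely $M^{coH}$, from which both assertions follow at once. Concretely I would isolate two facts: (a) $p_M(m)\in M^{coH}$ for every $m\in M$, and (b) $p_M(n)=n$ for every $n\in M^{coH}$. Granting these, (i) follows because (a) gives $p_M(M)\subseteq M^{coH}$ while (b) gives $M^{coH}=p_M(M^{coH})\subseteq p_M(M)$; and (ii) follows because for any $m$ the element $p_M(m)$ lies in $M^{coH}$ by (a), whence $p_M(p_M(m))=p_M(m)$ by (b).

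For (b) I would take $n\in M^{coH}$ and use the characterization $\r(n)=n\c 1_{(0)}\o 1_{(1)}$ from Lemma 1.1(2), so that $p_M(n)=(n\c 1_{(0)})\c\phi(S(1_{(1)}))=n\c\big(1_{(0)}\phi(S(1_{(1)}))\big)$. It then remains only to check that the element $1_{(0)}\phi(S(1_{(1)}))\in A$ equals $1$. Rewriting $1_{(0)}\o 1_{(1)}=\phi(1_1)\o 1_2$ by (\ref{2.1}), using that $\phi$ is an algebra map, the identity $x_1S(x_2)=\varepsilon_t(x)$, and $\varepsilon_t(1)=1$, one gets $1_{(0)}\phi(S(1_{(1)}))=\phi(1_1S(1_2))=\phi(\varepsilon_t(1))=\phi(1)=1$, hence $p_M(n)=n$. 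This part is short.

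The substance is part (a), where I would show directly that $\r(p_M(m))=p_M(m)\c 1_{(0)}\o 1_{(1)}$, which by Lemma 1.1(2) places $p_M(m)$ in $M^{coH}$. Starting from $p_M(m)=m_{(0)}\c\phi(S(m_{(1)}))$ and applying $\r$ through the $(A,H)$-Hopf module compatibility, I would expand $\phi(S(m_{(1)}))_{(0)}\o\phi(S(m_{(1)}))_{(1)}$ using the $H$-colinearity of $\phi$ together with $S(x)_1\o S(x)_2=S(x_2)\o S(x_1)$, then reorganize the Sweedler legs by coassociativity to reach $m_{(0)}\c\phi(S(m_{(1)3}))\o m_{(1)1}S(m_{(1)2})$. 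Collapsing $m_{(1)1}S(m_{(1)2})$ via $x_1S(x_2)=\varepsilon_t(x)$ and then applying (\ref{1b}) in the form $\varepsilon_t(x_1)\o x_2=S(1_1)\o 1_2x$ yields $m_{(0)}\c\phi(S(1_2m_{(1)}))\o S(1_1)$. Finally I would pull $S(1_2)$ out of $\phi$ using the anti-multiplicativity of $S$ and the algebra-map property, recognize the remaining factor as $p_M(m)$, and use the general identity $S(1_2)\o S(1_1)=1_1\o 1_2$ (a consequence of $S(1)=1$ and $\Delta S=(S\o S)\Delta^{op}$) followed by (\ref{2.1}) to arrive at $p_M(m)\c 1_{(0)}\o 1_{(1)}$.

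The main obstacle is the Sweedler-index bookkeeping in (a): keeping the coaction legs of $m_{(0)}$ and the coproduct legs of $m_{(1)}$ correctly correlated through the repeated use of coassociativity, the antipode coproduct law, and the weak-Hopf counital map $\varepsilon_t$. This computation runs exactly parallel to the verification that $\lambda$ is $H$-colinear in the proof of Theorem \ref{3d} (specialized to the absence of the extra argument $h$), so I would present it compactly and, where convenient, cite that computation rather than repeat every step.
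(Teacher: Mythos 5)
Your proposal is correct, and its overall skeleton (the two facts: $p_M(M)\subseteq M^{coH}$ and $p_M|_{M^{coH}}=\mathrm{id}$) matches what the paper actually uses; your step (b) is verbatim the paper's proof of the reverse inclusion in (i), including the reduction of $1_{(0)}\phi(S(1_{(1)}))$ to $\phi(\varepsilon_t(1))=1$ via (\ref{2.1}). The differences are organizational but worth noting. For the forward inclusion $p_M(M)\subseteq M^{coH}$ the paper simply cites the proof of \cite[Theorem 2.2]{Zhang04}, whereas you carry out the Sweedler computation $\rho(p_M(m))=p_M(m)\cdot 1_{(0)}\otimes 1_{(1)}$ explicitly; your chain of steps (colinearity of $\phi$, $S(x)_1\otimes S(x)_2=S(x_2)\otimes S(x_1)$, $x_1S(x_2)=\varepsilon_t(x)$, then (\ref{1b}) and $S(1_2)\otimes S(1_1)=1_1\otimes 1_2$, finally (\ref{2.1})) is exactly the $h$-free specialization of the $H$-colinearity computation for $\lambda$ in Theorem \ref{3d}, so it checks out and makes the lemma self-contained. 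For (ii) the paper performs an independent direct computation, collapsing $\phi(S(m_{(1)3}))\phi(S(m_{(1)1}S(m_{(1)2})))$ through $\varepsilon_t$ and (\ref{1d})-type identities, while you obtain idempotency for free from (a) and (b); your formal deduction is shorter and avoids a second round of index bookkeeping, at the cost of making (ii) logically dependent on the full strength of (i). Both routes are valid under the standing hypotheses ($\phi$ an $H$-colinear algebra map with $\phi(1)=1$).
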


\begin{proof}
(i) Following the proof of \cite[Theorem 2.2]{Zhang04}, we have $p_M(M)\subseteq M^{coH}$. Conversely for $m\in M^{coH}$,
\begin{align*}
p_M(m)&=m_{(0)}\cdot\phi(S(m_{(1)}))\\
&=m\cdot1_{(0)}\phi(S(1_{(1)}))\\
&\stackrel{(\ref{2.1})}=m\cdot\phi(1_{1})\phi(S(1_{2}))\\
&=m.
\end{align*}
So $m\in p_M(M)$, that is, $M^{coH}\subseteq p_M(M)$.

(ii) For all $m\in M$,
\begin{align*}
p_M(p_M(m))&=(m_{(0)}\cdot\phi(S(m_{(1)})))_{(0)}\phi(S((m_{(0)}\cdot\phi(S(m_{(1)})))_{(1)}))\\
&=m_{(0)}\cdot\phi(S(m_{(1)3}))\phi(S(m_{(1)1}S(m_{(1)2})))\\
&=m_{(0)}\cdot\phi(S(m_{(1)2}))\phi(S(\varepsilon_t(m_{(1)1})))\\
&=m_{(0)}\cdot\phi(S(\varepsilon_t(m_{(1)1})m_{(1)2}))\\
&=m_{(0)}\cdot\phi(S(m_{(1)}))=p_M(m).
\end{align*}
\end{proof}

Let $M\in\mathcal{M}^H_{\mathcal{P}A}$. For all $a\in A$ and $m\in M$, put
$$m\di' a=p_M(m\di a).$$

\begin{lemma}\label{lem5}
Let $H$ be a weak Hopf algebra. Let $M\in\mathcal{M}^H_{\mathcal{P}A}$. Suppose that there exists a right $H$-colinear algebra map $\phi:H\rightarrow A^A$, then for all $a,c\in A,m\in M$,
\begin{itemize}
  \item [(i)] $p_M(m\cdot a)=p_M(m)\cdot p_A(a),$
  \item [(ii)] $p_M(m)\di'a=p_M(m\di a)=m\di'a,$
  \item [(iii)] $m\di'\{a,c\}=(m\di'a)\di'c-(m\di'c)\di'a,$
  \item [(iv)] $p_M(m)\di a=(p_M(m)\di' a_{(0)})\cdot \phi(S^2(a_{(1)}))$,
  \item [(v)] $p_M(m_{(0)})\cdot\phi(m_{(1)})=m$.
\end{itemize}
\end{lemma}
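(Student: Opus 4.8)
The plan is to establish the five identities in the given order, since (ii) uses (i), (iii) uses (ii), and (iv) uses (ii); throughout I would exploit three structural facts: that $\phi(H)\subseteq A^A$ makes every Poisson bracket $\{\phi(h),a\}$ vanish, that $\phi$ is an algebra map into the \emph{commutative} algebra $A$ so that $\phi(uv)=\phi(vu)$, and the standard weak Hopf identities $S(x_1)x_2=\varepsilon_s(x)$, $x_1S(x_2)=\varepsilon_t(x)$, $x_1\varepsilon_s(x_2)=x$ together with the (anti)multiplicativity and (anti)comultiplicativity of $S$. For (i) I would expand $p_M(m\cdot a)$ by the $(A,H)$-Hopf module compatibility $(m\cdot a)_{(0)}\o(m\cdot a)_{(1)}=m_{(0)}a_{(0)}\o m_{(1)}a_{(1)}$, factor $\phi(S(m_{(1)}a_{(1)}))=\phi(S(a_{(1)}))\phi(S(m_{(1)}))$ via the anti-multiplicativity of $S$ and the algebra-map property, and then use commutativity and associativity of the $A$-action to regroup the result as $(m_{(0)}\cdot\phi(S(m_{(1)})))\cdot(a_{(0)}\phi(S(a_{(1)})))=p_M(m)\cdot p_A(a)$.

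For (ii) the equality $p_M(m\di a)=m\di' a$ is the definition of $\di'$, so the content is $p_M(p_M(m)\di a)=p_M(m\di a)$. First I would note $p_M(m)\di a=(m_{(0)}\di a)\cdot\phi(S(m_{(1)}))$: the Lie/multiplication compatibility applied to $m_{(0)}\cdot\phi(S(m_{(1)}))$ produces a bracket term $m_{(0)}\cdot\{\phi(S(m_{(1)})),a\}$ that vanishes because $\phi(S(m_{(1)}))\in A^A$. Applying $p_M$ and expanding the coaction of the product (via (\ref{2b}), coassociativity and colinearity of $\phi$) produces a multi-leg Sweedler expression which the identities $S(x_1)x_2=\varepsilon_s(x)$ and $x_1\varepsilon_s(x_2)=x$ collapse back to $(m_{(0)}\di a_{(0)})\cdot\phi(S(a_{(1)}))\phi(S(m_{(1)}))=p_M(m\di a)$. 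Part (iii) then follows formally: applying $p_M$ to the Lie-module identity $m\di\{a,c\}=(m\di a)\di c-(m\di c)\di a$ and invoking (ii) in the form $p_M(x)\di'c=p_M(x\di c)$ with $x=m\di a$ and $x=m\di c$ turns the right-hand side into $(m\di'a)\di'c-(m\di'c)\di'a$.

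For (v) I would compute directly: coassociativity gives $p_M(m_{(0)})=m_{(0)}\cdot\phi(S(m_{(1)1}))$ with outer leg $m_{(1)2}$, so $p_M(m_{(0)})\cdot\phi(m_{(1)})=m_{(0)}\cdot\phi(S(m_{(1)1})m_{(1)2})=m_{(0)}\cdot\phi(\varepsilon_s(m_{(1)}))$. Writing $\varepsilon_s(x)=1_1\varepsilon(x1_2)$ and converting $\phi(1_1)$ into $1_{(0)}$ via (\ref{2.1}) turns this into $m_{(0)}1_{(0)}\varepsilon(m_{(1)}1_{(1)})$; the Hopf-module identity (\ref{1.2e}) with $x=1_{(1)}$, followed by the idempotence $\r(1_A)\r(1_A)=\r(1_A)$ and the counit, then collapses it to $m\cdot 1_A=m$. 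Note this route avoids the hypothesis $1_{(0)}\phi(1_{(1)})=1$ of Theorem \ref{3d}, which need not hold here.

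The hard part is (iv). I would compute the two sides separately. The left side is $p_M(m)\di a=(m_{(0)}\di a)\cdot\phi(S(m_{(1)}))$ as in (ii). For the right side, (ii) gives $p_M(m)\di'a_{(0)}=p_M(m\di a_{(0)})$, and expanding $p_M$ with (\ref{2b}) and coassociativity yields $(m_{(0)}\di a_{(0)})\cdot\phi(S(m_{(1)}a_{(1)1}))\phi(S^2(a_{(1)2}))$; the decisive step is that, because $\phi$ lands in the commutative $A$, $\phi(S(a_{(1)1})S^2(a_{(1)2}))=\phi(S^2(a_{(1)2})S(a_{(1)1}))=\phi(S(\varepsilon_t(a_{(1)})))$, so the right side becomes $(m_{(0)}\di a_{(0)})\cdot\phi(S(m_{(1)}))\phi(S(\varepsilon_t(a_{(1)})))$. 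Rewriting $a_{(0)}\o\varepsilon_t(a_{(1)})=1_{(0)}a\o1_{(1)}$ by (\ref{1.2b}) and expanding $m_{(0)}\di(1_{(0)}a)=(m_{(0)}\di 1_{(0)})a+(m_{(0)}\di a)1_{(0)}$ splits this into a main term, where $1_{(0)}\phi(S(1_{(1)}))=\phi(1_1S(1_2))=\phi(\varepsilon_t(1))=1$ recovers the left side, and a cross term $a\cdot\sum(m_{(0)}\di 1_{(0)})\phi(S(m_{(1)}1_{(1)}))$. I expect this cross term to be the main obstacle. The observation that removes it is that applying (\ref{2b}) with $a=1_A$ and using $m\di 1_A=0$ gives $\sum m_{(0)}\di 1_{(0)}\o m_{(1)}1_{(1)}=0$ in $M\o H$; the cross term is exactly the image of this vanishing element under the linear map $n\o h\mapsto a\cdot n\cdot\phi(S(h))$, hence is $0$, and the two sides agree.
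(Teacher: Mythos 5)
Your proof is correct and follows essentially the same route as the paper's for parts (ii), (iv) and (v): kill the bracket term via $\{\phi(S(m_{(1)})),a\}=0$, collapse the Sweedler legs using $S(x_1)x_2=\varepsilon_s(x)$, $x_1S(x_2)=\varepsilon_t(x)$ and the commutativity of $A$ inside $\phi$, and convert $a_{(0)}\o\varepsilon_t(a_{(1)})$ via (\ref{1.2b}). Two points in your favour: the paper omits the proofs of (i) and (iii) entirely (your direct factorization for (i) and the formal application of $p_M$ to the Lie-module identity for (iii) are exactly what is needed), and in (iv) the paper drops the cross term $(m_{(0)}\di 1_{(0)})\cdot a\,\phi(S(m_{(1)}1_{(1)}))$ without comment, whereas your observation that it is the image of $\r(m\di 1_A)=0$ under a linear map supplies the missing justification.
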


\begin{proof}
We only prove (ii), (iv) and (v).

(ii) For all $a\in A,m\in M$,
\begin{align*}
p_M(m)\di' a&=p_M(p_M(m)\di a)\\
&=p_M((m_{(0)}\cdot\phi(S(m_{(1)})))\di a)\\
&=p_M(m_{(0)}\cdot\{\phi(S(m_{(1)})), a\})+p_M((m_{(0)}\di a)\cdot\phi(S(m_{(1)})))\\
&=p_M((m_{(0)}\di a)\cdot\phi(S(m_{(1)})))\\
&=p_M(m_{(0)}\di a)\cdot p_A(\phi(S(m_{(1)})))\\
&=(m_{(0)}\di a_{(0)})\cdot\phi(S(m_{(1)1}a_{(1)}))\phi(S(m_{(1)2}))_{(0)}\phi(S(\phi(S(m_{(1)2}))_{(1)}))\\
&=(m_{(0)}\di a_{(0)})\cdot\phi(S(m_{(1)1}a_{(1)}))\phi(S(m_{(1)3}))\phi(S^2(m_{(1)2}))\\
&=(m_{(0)}\di a_{(0)})\cdot\phi(S(m_{(1)1}a_{(1)}))\phi(S(\varepsilon_s(m_{(1)2})))\\
&\stackrel{(\ref{1.2b})}=(m_{(0)}\di a_{(0)})\cdot\phi(S(m_{(1)}1_1a_{(1)}))\phi(S^2(1_2))\\
&=(m_{(0)}\di a_{(0)})\cdot\phi(S(m_{(1)}a_{(1)}))\phi(S(1_1S(1_2)))\\
&=(m\di a)_{(0)}\cdot\phi(S((m\di a)_{(1)}))\\
&=p_M(m\di a)=m\di' a.
\end{align*}

(iv) For all $a\in A,m\in M$,
\begin{align*}
&(p_M(m)\di' a_{(0)})\cdot\phi(S^2(a_{(1)}))\\
&\stackrel{(ii)}=(m\di' a_{(0)})\cdot\phi(S^2(a_{(1)}))\\
&=p_M(m\di a_{(0)})\cdot\phi(S^2(a_{(1)}))\\
&=(m_{(0)}\di a_{(0)})\cdot\phi(S(m_{(1)}a_{(1)1}))\phi(S^2(a_{(1)2}))\\
&=(m_{(0)}\di a_{(0)})\cdot\phi(S(m_{(1)})S(a_{(1)1}S(a_{(1)2})))\\
&=(m_{(0)}\di a_{(0)})\cdot\phi(S(m_{(1)}\varepsilon_t(a_{(1)})))\\
&\stackrel{(\ref{1.2b})}=(m_{(0)}\di 1_{(0)}a)\cdot\phi(S(m_{(1)}1_{(1)}))\\
&=(m_{(0)}\di 1_{(0)})\cdot a\phi(S(m_{(1)}1_{(1)}))+(m_{(0)}\di a)\cdot 1_{(0)}\phi(S(m_{(1)}1_{(1)}))\\
&=(m_{(0)}\di a)\cdot 1_{(0)}\phi(S(m_{(1)}1_{(1)}))\\
&=(m_{(0)}\di a)\cdot\phi(1_{1}S(m_{(1)}1_{2}))\\
&=(m_{(0)}\di a)\cdot\phi(S(m_{(1)}))\\
&=p_M(m)\di a.
\end{align*}

(v) For all $m\in M$,
\begin{align*}
p_M(m_{(0)})\cdot\phi(m_{(1)})&=m_{(0)}\cdot\phi(S(m_{(1)1}))\phi(m_{(1)2})\\
&=m_{(0)}\cdot\phi(S(m_{(1)1})m_{(1)2})\\
&=m_{(0)}\cdot\phi(1_{1})\varepsilon(m_{(1)}1_{2})\\
&=m_{(0)}\cdot 1_{(0)}\varepsilon(m_{(1)}1_{(1)})\\
&=m.
\end{align*}
\end{proof}

It follows from Lemma \ref{lem5} (v) that every Poisson $(A,H)$-Hopf module $M$ is a Lie $A$-module under the new Lie action $\di'$, and $M^{coH}$ is a Lie $A$-submodule of $M$. Indeed for all $m\in M^{coH},a\in A$,
\begin{align*}
&(m\di'a)_{(0)}\o(m\di'a)_{(1)}=p_M(m\di a)_{(0)}\o p_M(m\di a)_{(1)}\\
%&=[(m\di a)_{(0)}\cdot\phi(S((m\di a)_{(1)}))]_{(0)}\o [(m\di a)_{(0)}\cdot\phi(S((m\di a)_{(1)}))]_{(1)}\\
&=[(m_{(0)}\di a_{(0)})\cdot\phi(S(m_{(1)}a_{(1)}))]_{(0)}\o [(m_{(0)}\di a_{(0)})\cdot\phi(S(m_{(1)}a_{(1)}))]_{(1)}\\
%&=(m_{(0)}\di a_{(0)})_{(0)}\cdot\phi(S(m_{(1)}a_{(1)}))_{(0)}\o (m_{(0)}\di a_{(0)})_{(1)}\phi(S(m_{(1)}a_{(1)}))_{(1)}\\
&=(m_{(0)}\di a_{(0)})\cdot\phi(S(m_{(1)3}a_{(1)3}))\o m_{(1)1} a_{(1)1}S(m_{(1)2}a_{(1)2})\\
&=(m_{(0)}\di a_{(0)})\cdot\phi(S(m_{(1)2}a_{(1)2}))\o \varepsilon_t(m_{(1)1} a_{(1)1})\\
&\stackrel{(\ref{1b})}=(m_{(0)}\di a_{(0)})\cdot\phi(S(1_2m_{(1)}a_{(1)}))\o S(1_1)\\
&=((m\cdot1_{(0)})\di a_{(0)})\cdot\phi(S(1_21_{(1)}a_{(1)}))\o S(1_1)\\
&=((m\cdot1_{(0)})\di a_{(0)})\cdot\phi(S(1_{(1)}a_{(1)})1_1)\o 1_2\\
&=((m\cdot1_{(0)})\di a_{(0)})\cdot\phi(S(1_{(1)}a_{(1)}))1'_{(0)}\o 1'_{(1)}\\
&=(m_{(0)}\di a_{(0)})\cdot\phi(S(m_{(1)}a_{(1)}))1'_{(0)}\o 1'_{(1)}\\
&=(m\di' a)\cdot1'_{(0)}\o 1'_{(1)},
\end{align*}
where $1_{(0)}\o 1_{(1)}=1'_{(0)}\o 1'_{(1)}$.

Since the right $H$-comodule Poisson algebra $A$ belongs to the category $\mathcal{M}^H_{\mathcal{P}A}$, $A$ is also a Lie $A$-module under $\di'$, namely, for all $a,b\in A$,
$$a\di'b=p_A(a\di b)=p_A([a,b]),$$
and $A^{coH}$ is a Lie $A$-submodule of $A$ under $\di'$.

\begin{lemma}\label{lem6}
Let $\phi:H\rightarrow A^A$ be a right $H$-colinear algebra map, and $M\in\mathcal{M}^H_{\mathcal{P}A}$.
\begin{itemize}
  \item [(1)] If the Lie action $\di'$ on $M^{coH}$ is trivial, then $M^{AcoH}=M^{coH}$.
  \item [(2)] If the Lie action $\di'$ on $A^{coH}$ is trivial, then $A^{AcoH}=A^{coH}$.
\end{itemize}
\end{lemma}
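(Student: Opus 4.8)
The plan is to prove both equalities by double inclusion, observing that in each case one direction is immediate and all the content sits in the other. For (1), the definition gives $M^{AcoH}=M^{coH}\cap M^A$, so $M^{AcoH}\subseteq M^{coH}$ holds trivially and the whole task is the reverse inclusion $M^{coH}\subseteq M^{AcoH}$. Since we already have $m\in M^{coH}$, this amounts to showing $m\in M^A$, i.e. that $m\di a=0$ for every $a\in A$. Thus I would reduce (1) to the single implication: $m\in M^{coH}$ and $\di'$ trivial on $M^{coH}$ $\Rightarrow$ $m\di a=0$.

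First I would record the one auxiliary fact I need, namely that $p_M$ restricts to the identity on $M^{coH}$. This is exactly the computation already carried out in the proof of Lemma \ref{lem4}(i), where for $m\in M^{coH}$ one obtains $p_M(m)=m\cdot 1_{(0)}\,\phi(S(1_{(1)}))=m$. Consequently, for $m\in M^{coH}$ I may freely replace $p_M(m)$ by $m$. The key step is then to feed this into Lemma \ref{lem5}(iv), which reads
$$p_M(m)\di a=(p_M(m)\di' a_{(0)})\cdot\phi(S^2(a_{(1)})).$$
Taking $m\in M^{coH}$ and using $p_M(m)=m$, the left-hand side becomes $m\di a$ and the right-hand side becomes $(m\di' a_{(0)})\cdot\phi(S^2(a_{(1)}))$. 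The hypothesis that $\di'$ is trivial on $M^{coH}$ says precisely that $m\di' b=0$ for every $b\in A$; applying this to each Sweedler summand $a_{(0)}$ annihilates every term of the sum, so $m\di a=0$. Hence $m\in M^A$, giving $m\in M^{coH}\cap M^A=M^{AcoH}$ and finishing (1). For (2) I would simply specialise to $M=A$, the right $H$-comodule Poisson algebra itself, which lies in $\mathcal{M}^H_{\mathcal{P}A}$; there the Lie action is $a\di b=\{a,b\}$ and $A^A$ is the Poisson centre, so the identical argument yields $A^{coH}\subseteq A^A$ and therefore $A^{AcoH}=A^{coH}$.

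The one point that needs care — and essentially the only obstacle — is the passage from ``$m\di' a_{(0)}=0$ for each summand'' to ``the displayed expression vanishes.'' This is legitimate precisely because triviality of $\di'$ is asserted over all of $A$, so it applies termwise to the first tensor leg of $\rho(a)=a_{(0)}\o a_{(1)}$ (a finite sum of elements of $A$), irrespective of the factor $\phi(S^2(a_{(1)}))$; no weak-Hopf or commutativity subtlety intervenes here. Everything else is a direct invocation of Lemmas \ref{lem4} and \ref{lem5}, and I would not expect any genuine computation beyond citing Lemma \ref{lem5}(iv) and the identity $p_M|_{M^{coH}}=\mathrm{id}$.
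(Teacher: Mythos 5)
Your proposal is correct and follows essentially the same route as the paper: the paper also notes $M^{AcoH}\subseteq M^{coH}$ is trivial, and for $m\in M^{coH}$ computes $m\di a=p_M(m)\di a=(p_M(m)\di' a_{(0)})\cdot\phi(S^2(a_{(1)}))=0$ via Lemma \ref{lem5}(iv) together with $p_M|_{M^{coH}}=\mathrm{id}$, then deduces (2) from (1). Your additional remarks on the termwise vanishing over the Sweedler sum are a harmless elaboration of what the paper leaves implicit.
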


\begin{proof}
(1) Clearly $M^{AcoH}\subseteq M^{coH}$. For all $a\in A,m\in M^{coH}$, by Lemma \ref{lem5} (4), we have
\begin{align*}
m\di a=p_M(m)\di a=(p_M(m)\di' a_{(0)})\cdot \phi(S^2(a_{(1)}))=0.
\end{align*}

(2) It is a direct consequence of (1).
\end{proof}

\begin{theorem}\label{thm7}
Set $B=A^{AcoH}.$ Let $M$ be a Poisson $(A, H)$-Hopf module, and $\phi:H\rightarrow A^A$ a right $H$-colinear algebra map. Suppose $M^{coH}$ and $A^{coH}$ are trivial Lie $A$-modules under $\di'$. Then the linear map $\a:M^{AcoH}\o_B A \rightarrow M,\ m\o a\mapsto m\cdot a$ is an isomorphism of Poisson $(A, H)$-Hopf modules.
\end{theorem}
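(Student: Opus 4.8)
The plan is to prove that $\a$, which is already a morphism of Poisson $(A,H)$-Hopf modules by Lemma \ref{lem3}, is bijective, by exhibiting an explicit two-sided inverse assembled from the projection $p_M$ and the map $\phi$. First I would record the consequences of the standing hypotheses: since $M^{coH}$ and $A^{coH}$ are trivial Lie $A$-modules under $\di'$, Lemma \ref{lem6} yields $M^{AcoH}=M^{coH}$ and $B=A^{AcoH}=A^{coH}$. Hence the source of $\a$ is $M^{coH}\o_{A^{coH}}A$, and by Lemma \ref{lem4}(i) one has $p_M(M)=M^{coH}=M^{AcoH}$ and, taking $M=A$, $p_A(A)=A^{coH}=B$.

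Next I would define $\b\colon M\to M^{AcoH}\o_BA$ by $\b(m)=p_M(m_{(0)})\o_B\phi(m_{(1)})$. This is a well-defined linear map, being the composite of the coaction $\r$, the map $p_M\o\phi$, and the canonical projection onto the balanced tensor product; the first tensor factor lands in $M^{coH}=M^{AcoH}$ by Lemma \ref{lem4}(i). The identity $\a\circ\b=id_M$ is then immediate, since $\a(\b(m))=p_M(m_{(0)})\cdot\phi(m_{(1)})=m$ by Lemma \ref{lem5}(v).

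The heart of the argument is the reverse identity $\b\circ\a=id$. For $m\in M^{coH}$ and $a\in A$ I would compute, using the Hopf-module compatibility, $\b(m\cdot a)=p_M\big((m\cdot a)_{(0)}\big)\o_B\phi\big((m\cdot a)_{(1)}\big)=p_M(m_{(0)}\cdot a_{(0)})\o_B\phi(m_{(1)}a_{(1)})$. Invoking the coinvariance of $m$ in the form $m_{(0)}\o m_{(1)}=m\cdot1_{(0)}\o1_{(1)}$ together with Lemma \ref{lem5}(i) to split $p_M$ on the product, the target is the expression $m\cdot\big(a_{(0)}\phi(S(a_{(1)}))\big)\o_B\phi(a_{(2)})$. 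The decisive structural observation is that $a_{(0)}\phi(S(a_{(1)}))=p_A(a_{(0)})$ lies in $p_A(A)=A^{coH}=B$, so it may be carried across $\o_B$ from the left-hand factor to the right-hand one; once there it recombines by Lemma \ref{lem5}(v), applied to $A$ viewed as a Poisson $(A,H)$-Hopf module, into $a_{(0)}\phi(S(a_{(1)}))\phi(a_{(2)})=a$, which gives $m\o_Ba$ and proves $\b\circ\a=id$.

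The step I expect to be the main obstacle is the weak-Hopf bookkeeping inside this last computation: because $\D(1)\neq1\o1$ and the counital projections $\v_t,\v_s$ intervene, the passage from $m_{(0)}\cdot a_{(0)}\o m_{(1)}a_{(1)}$ to the factored form must be handled carefully with the identities (\ref{1.2b}), (\ref{1.2b'}) and the properties of $\v_s,\v_t$, precisely in the style of the verifications carried out for Lemma \ref{lem5}. In particular one must confirm that the element transported across $\o_B$ genuinely lies in $B=A^{coH}$, which is exactly the point where the hypothesis $A^{AcoH}=A^{coH}$ is indispensable. Finally, since $\a$ is a bijective morphism in $\mathcal{M}^H_{\mathcal{P}A}$, its inverse $\b$ is automatically $A$-linear, Lie $A$-linear and $H$-colinear, so $\a$ is an isomorphism of Poisson $(A,H)$-Hopf modules, as claimed.
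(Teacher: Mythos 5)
Your proposal is correct and follows essentially the same route as the paper: the same inverse $\b(m)=p_M(m_{(0)})\o_B\phi(m_{(1)})$, with $\a\circ\b=\mathrm{id}$ from Lemma \ref{lem5}(v) and $\b\circ\a=\mathrm{id}$ obtained by recognizing $p_A(a_{(0)})\in A^{coH}=B$ (via Lemma \ref{lem6}) and sliding it across $\o_B$ before recombining. Your closing observation that the weak-Hopf identities (\ref{1.2b}), (\ref{1.2b'}) must handle the $\D(1)\neq 1\o 1$ bookkeeping is exactly where the paper's displayed computation does its work.
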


\begin{proof}
By Lemma \ref{lem3}, $\a$ is a homomorphism of Poisson $(A,H)$-Hopf modules. Let $a\in A,m\in M$, by Lemma \ref{lem6}, we have $p_M(m)\in M^{AcoH}$. So the following linear map is well defined:
$$\b:M\rightarrow M^{AcoH}\o_BA,\ m\mapsto p_M(m_{(0)})\o_B\phi(m_{(1)}).$$
First of all, by Lemma \ref{lem5} (5), $\a\circ\b=id_M$. Then for all $a\in A, m\in M^{AcoH}$,
\begin{align*}
\b(\a(m\o a))&=m_{(0)}\cdot a_{(0)}\phi(S(m_{(1)1}a_{(1)1}))\o_B\phi(m_{(1)2}a_{(1)2})\\
&=m\cdot 1_{(0)}a_{(0)}\phi(S(1_{(1)1}a_{(1)1}))\o_B\phi(1_{(1)2}a_{(1)2})\\
&=m\cdot a_{(0)}\phi(S(a_{(1)1}))\o_B\phi(a_{(1)2})\\
&=m\o_B a_{(0)}\phi(S(a_{(1)1}))\phi(a_{(1)2})\ \ \hbox{by Lemma \ref{lem6}}\\
&=m\o_B a_{(0)}\phi(S(a_{(1)1}))\phi(a_{(1)2})\\
&=m\o_B a_{(0)}\phi(\varepsilon_s(a_{(1)}))\\
&=m\o_B a_{(0)}\phi(1_1)\varepsilon(a_{(1)}1_2)\\
&=m\o_B a_{(0)}1_{(0)}\varepsilon(a_{(1)}1_{(1)})\\
&=m\o_B a.
\end{align*}
That is, $\b\circ\a=id_{M^{AcoH}\o_BA}$. The proof is completed.
\end{proof}

Let $A$ be a right $H$-comodule Poisson algebra. For any morphism $f:M\rightarrow N$ in $\mathcal{M}^H_{\mathcal{P}A}$ and $m\in M^{AcoH}$,
\begin{align*}
f(m)_{(0)}\o f(m)_{(1)}&=f(m_{(0)})\o m_{(1)}\\
&=f(m_{(0)})\o \varepsilon_t(m_{(1)})\\
&=f(m)_{(0)}\o \varepsilon_t(f(m)_{(1)}),
\end{align*}
and for all $A\in A$,
$$f(m)\di a=f(m\di a)=0,$$
that is, $f(m)\in N^{AcoH}$. This gives rise to a functor
$$G=(-)^{AcoH}:\mathcal{M}^H_{\mathcal{P}A}\rightarrow \mathcal{M}_{B},\ M\mapsto M^{AcoH}.$$
By Lemma \ref{lem2}, we also have a functor
$$F:\mathcal{M}_{B}\rightarrow\mathcal{M}^H_{\mathcal{P}A},\ M\mapsto M\o_BA.$$

\begin{proposition}
Let $M\in\mathcal{M}^H_{\mathcal{P}A}, N\in\mathcal{M}_{B}$. There exists a functorial isomorphism of Poisson $(A,H)$-Hopf modules
\begin{align*}
\psi:Hom^H_{\mathcal{P}A}(N\o_B&A,M)\rightarrow Hom_B(N,M^{AcoH})\\
&f\mapsto\psi(f):N\rightarrow M^{AcoH},\ n\mapsto f(1\o n).
\end{align*}
Thus the functors $F$ and $G$ form an adjoint pair with unit and counit
\begin{align*}
&\eta_N:N\rightarrow (N\o_B A)^{AcoH},\ n\mapsto 1\o n,\\
&\epsilon_M:M^{AcoH}\o_B A\rightarrow M,\ m\o a\mapsto m\cdot a.
\end{align*}
\end{proposition}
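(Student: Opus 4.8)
The plan is to prove the asserted natural bijection by writing down $\psi$ together with an explicit two-sided inverse $\Theta$; this simultaneously realises $(F,G)=(-\o_BA,(-)^{AcoH})$ as an adjoint pair, after which the unit and counit are read off as the images of the identity morphisms.

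First I would check that $\psi$ is well defined, i.e.\ that for $f\in Hom^H_{\mathcal{P}A}(N\o_BA,M)$ the assignment $\psi(f):n\mapsto f(n\o 1)$ lands in $Hom_B(N,M^{AcoH})$. The key observation is that $n\o 1$ already lies in $(N\o_BA)^{AcoH}$: by Lemma \ref{lem2} its coaction is $\r(n\o 1)=n\o 1_{(0)}\o 1_{(1)}=(n\o 1)\cdot 1_{(0)}\o 1_{(1)}$, so it is coinvariant by the second part of Lemma 1.1, and $(n\o 1)\di a=n\o\{1,a\}=0$ since $\{1,a\}=0$, so it is Lie $A$-invariant. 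Hence $\eta_N(n):=n\o 1$ is a $B$-linear map $N\to(N\o_BA)^{AcoH}$, and $\psi(f)=G(f)\circ\eta_N$ is a composite of $B$-linear maps into $M^{AcoH}$, using the functoriality of $G=(-)^{AcoH}$ recorded just before the statement.

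Next I would construct the inverse $\Theta:Hom_B(N,M^{AcoH})\to Hom^H_{\mathcal{P}A}(N\o_BA,M)$ by $\Theta(g)(n\o a)=g(n)\cdot a$. It is balanced over $B$ because $g$ is $B$-linear and the $A$-module action is associative, and it is plainly $A$-linear. Lie $A$-linearity follows from $g(n)\in M^A$: since $g(n)\di a'=0$, the Poisson compatibility gives $(g(n)\cdot a)\di a'=(g(n)\di a')a+g(n)\{a,a'\}=g(n)\{a,a'\}=\Theta(g)((n\o a)\di a')$. The main obstacle is the $H$-colinearity of $\Theta(g)$: here I would exploit that $g(n)\in M^{coH}$, together with the comodule-algebra identity $1_{(0)}a_{(0)}\o 1_{(1)}a_{(1)}=a_{(0)}\o a_{(1)}$ (which is $\r(1\cdot a)=\r(a)$), to obtain
\[
\r(g(n)\cdot a)=g(n)_{(0)}\cdot a_{(0)}\o g(n)_{(1)}a_{(1)}=g(n)\cdot 1_{(0)}a_{(0)}\o 1_{(1)}a_{(1)}=g(n)\cdot a_{(0)}\o a_{(1)},
\]
which is exactly $(\Theta(g)\o id)\,\r_{N\o_BA}(n\o a)=g(n)\cdot a_{(0)}\o a_{(1)}$. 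Thus $\Theta(g)$ is a morphism in $\mathcal{M}^H_{\mathcal{P}A}$.

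Finally I would verify that $\psi$ and $\Theta$ are mutually inverse: $\Theta(\psi(f))(n\o a)=\psi(f)(n)\cdot a=f((n\o 1)\cdot a)=f(n\o a)$ and $\psi(\Theta(g))(n)=\Theta(g)(n\o 1)=g(n)\cdot 1=g(n)$. Naturality in both variables is immediate from $\psi(f)=f\circ\eta_N$: for $h:M\to M'$ one gets $\psi(h\circ f)=G(h)\circ\psi(f)$, and for $k:N'\to N$ one gets $\psi(f\circ(k\o id))=\psi(f)\circ k$. This yields the adjunction $F\dashv G$, whose unit is $\eta_N=\psi(id_{N\o_BA})$, namely $n\mapsto n\o 1$, and whose counit is $\epsilon_M=\Theta(id_{M^{AcoH}})$, namely $m\o a\mapsto m\cdot a$; the latter is precisely the map $\a$ of Lemma \ref{lem3}, so both agree with the claimed formulas.
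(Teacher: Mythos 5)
Your proposal is correct and follows essentially the same route as the paper: you define the inverse $g\mapsto\bigl(n\o a\mapsto g(n)\cdot a\bigr)$ (the paper's $\psi'$), verify well-definedness on both sides using the coinvariance characterization $\r(m)=m\cdot 1_{(0)}\o 1_{(1)}$ and the vanishing of the Lie action on $1$, and check the two composites. The only cosmetic difference is that you factor the well-definedness of $\psi$ through the observation $n\o 1\in(N\o_BA)^{AcoH}$ and the functoriality of $(-)^{AcoH}$, while the paper computes directly on $f(n\o 1)$; you also spell out the naturality and the mutual-inverse verification that the paper dismisses as trivial.
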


\begin{proof}
For all $N\in\mathcal{M}_{B}$ and $n\in N$, since $f$ is a morphism of $H$-comodule,
\begin{align*}
&f(n\o1)_{(0)}\o\varepsilon_t(f(n\o1)_{(1)})=f((n\o1)_{(0)})\o\varepsilon_t((n\o1)_{(1)})\\
&=f(n\o1_{(0)})\o\varepsilon_t(1_{(1)})=f(n\o1_{(0)})\o1_{(1)}\\
&=f(n\o1)_{(0)}\o f(n\o1)_{(1)}.
\end{align*}
And obviously for all $a\in A$, $f(n\o 1)\di a=f(n\o 1\di a)=0$, we have $f(n\o 1)\in M^{AcoH}.$
It is straightforward to verify that $\psi(f)$ is a morphism of $B$-module. Hence $\psi$ is well defined. Now define
\begin{align*}
\psi':Hom_B(N,&M^{AcoH})\rightarrow Hom^H_{\mathcal{P}A}(N\o_BA,M)\\
&g\mapsto\psi'(g):N\rightarrow M^{AcoH},\ n\o a\mapsto g(n)\cdot a.
\end{align*}
It is obvious that $\psi'(g)$ is a morphism of both $A$-module and Lie $A$-module. And
\begin{align*}
&\psi'(g)(n\o a)_{(0)}\o\psi'(g)(n\o a)_{(1)}\\
&=(g(n)\cdot a)_{(0)}\o(g(n)\cdot a)_{(1)}\\
&=g(n)_{(0)}\cdot a_{(0)}\o g(n)_{(1)}\cdot a_{(1)}\\
&=g(n)\cdot 1_{(0)}a_{(0)}\o 1_{(1)}a_{(1)}\\
&=g(n)\cdot a_{(0)}\o a_{(1)},
\end{align*}
hence $\psi'(g)$ is a morphism of $H$-comodule, and therefore $\psi'$ is well defined. More $\psi$ and $\psi'$ are mutual inverse, and the verification is trivial. The proof is completed.
\end{proof}

%\section*{Author Contribution}
%Daowei Lu and Dingguo Wang contributed equally to this manuscript.

\section*{Acknowledgement}

The authors are very grateful to the referee for his/her valuable comments. This work was supported by the NNSF of China (Nos. 12271292, 11901240).

%\section*{Declarations}
%{\bf Availability of Data and Material} Data sharing not applicable to this article as no datasets were generated or
%analysed during the current study.
%
%{\bf Competing interests} The authors declare that there are no competing interests.

\end{document}